\theoremstyle{plain}
\newtheorem{theorem}{Theorem}
\newtheorem{lemma}{Lemma}
\newtheorem{corollary}{Corollary}
\newtheorem{proposition}{Proposition}
\newtheorem{assertion}{Claim}
\theoremstyle{definition}
\newtheorem{remark}{Remark}
\newtheorem{example}{Example}
\newtheorem{property}{Property}  
\begin{document}

\noindent UDC 519.17

\title{When a forest, narrowed to an atom of subset algebra, turns out to be a tree} 
\author{V.\,A. Buslov}
\begin{center}
{\bf When a forest, narrowed to an atom of subset algebra, turns out to be a tree} 
\end{center}
\begin{center}
{\large V.\,A. Buslov}
\end{center}

\begin{abstract} 
It is proved that the restriction of a $k$ and $(k-1)$-component directed spanning forest of minimal weight to an atom of the subset algebra generated by the sets of vertices of trees of $k$-component minimal spanning forests is a tree. For spanning minimal forests consisting of fewer components, this property, generally speaking, does not exist.
\end{abstract}
 
This article is based on the work of \cite{V6} and is a continuation of the work of \cite{V7}. Definitions and designations correspond to those adopted in them. We present all the results used in the form of a list of properties, highlighting separately only the fundamental lemma on replacing  arcs \cite[Lemma 1]{V6}.
\section{Notations and definitions}
We denote the set of vertices of a digraph $G$ by ${\cal V}G$, and the set of its arcs by ${\cal A}G$.  
 
The starting object is a weighted digraph $V$, the set of vertices of which is denoted separately for convenience --- ${\cal N}$, $|{\cal N}|=N$;  arcs $(i,j)\in{\cal A}V$ are assigned real weights $v_{ij}$.
We study the spanning subgraphs (subgraphs with a set of vertices ${\cal N}$) of the digraph $V$, which are entering forests. An entering forest (hereinafter simply, forest) is a digraph in which no more than one arc outgoes from each vertex and there are no contours in it. The connected components of a forest are trees. The only vertex of the tree from which the arc does not go is the root. We denote the set of roots of the forest $F$ by ${\cal K}_F$.  

By $T^F_i$ we mean the inclusion-maximal subtree of the forest $F$ rooted at vertex $i$. If $i\in{\cal K}_F$, then $T^F_i$ is a connected component of $F$.

Since only digraphs appear in the work (except for a specially stated point), we use the term graph for them, where this does not lead to misunderstandings.

For a subgraph $G$ of  $V$ and a set ${\cal S}\subseteq {\cal N}$ we define weights   
\begin{equation*}
\Upsilon^G_{\cal S}=\sum_{\begin{smallmatrix}i\in{\cal S} \\ (i,j)\in {\cal A}G\end{smallmatrix}} v_{ij} \ , \ \ \Upsilon^G=\Upsilon^G_{\cal N}=\sum_{(i,j)\in {\cal A}G} v_{ij} \ . 
\end{equation*}
${\cal F}^k$ --- a set of spanning forests consisting of $k$ trees. The minimum weight of $k$-component forests is denoted by 
$$\phi^k=\min_{F\in{\cal
F}^k}\Upsilon^F .$$
If ${\cal F}^k=\emptyset$, we put $\phi^k=\infty$, in particular, $\phi^0=\infty$.

$F\in\tilde{\cal F}^k$ means that $F\in{\cal F}^k$ and $\Upsilon^F=\phi^k$. We will agree to call such forests minimal.

$G|_{\cal S}$ is a subgraph of  $G$ induced by the set ${\cal S}$, that is, ${\cal V}G|_{\cal S}={\cal S }$ and ${\cal A}G|_{\cal S}$ consists of all arcs of  $G$, both ends of which belong to the set ${\cal S}$. $G|_{\cal S}$ is also called the restriction of  $G$ to the set ${\cal S}$.

${\cal F}^k|_{\cal S}$ is a set of subgraphs of $k$-component forests induced by the set ${\cal S}$.

$\tilde{\cal F}^k|_{\cal S}$ is a set of subgraphs of $k$-component forests of minimal weight induced by the set ${\cal S}$.

$G^F_{\uparrow{\cal S}}$ is a graph obtained from $G$ by replacing arcs outgoing from the vertices of ${\cal S}$ with arcs outgoing from the same vertices in $F $.

If there is an arc whose outcome belongs to the set ${\cal S}$, but its entry does not, then we say that the arc outgoes from the set ${\cal S}$. Similarly, if there is an arc whose entry belongs to ${\cal S}$ and whose outcome does not belong, then we say that the arc enters ${\cal S}$. The outgoing neighborhood ${\cal N}^{out}_{\cal S}(G)$ of the set ${\cal S}$ is the set of arc entries outgoing in $G$ from the set ${\cal S }$; The incoming neighborhood ${\cal N}^{in}_{\cal S}(G)$ is defined similarly.

For any subset ${\cal D}\subset {\cal N}$, its complement $\overline{\cal D}={\cal N}\setminus {\cal D}$. 

A family $\mathfrak B $ of nonempty sets ${\cal B}_i$ is called a partition of ${\cal N}$ if ${\cal B}_i\cap{\cal B}_j=\emptyset$ for $i\neq j$, and ${\cal N}=\underset{i}{\cup}{\cal S}_i$.

Let $\mathfrak{B}$ be some family of subsets of ${\cal N}$. The family $\mathfrak A$, consisting of all possible complements and intersections of these subsets, is called an algebra {\it generated} by the family $\mathfrak{B}$.

A non-empty set ${\cal A}$ of algebra $\mathfrak{A}$ is called an {\it atom} if for any element ${\cal B}$ of  algebra $\mathfrak{A}$ either ${\cal A }\cap{\cal B}=\emptyset$, or ${\cal A}\cap{\cal B}={\cal A}$.

\section{Some properties of the arc replacement operation}

The main tool of all the constructions carried out is the operation of replacing arcs outgoing from the vertices of a certain set ${\cal D}$ between two forests $F$ and $G$. Independently of each other, each of two graphs $F_{\uparrow\cal D}^G$ and $G_{\uparrow\cal D}^F$ obtained by this procedure may turn out to be a forest or not. The fundamental statement is the following 

\begin{lemma}\cite[Lemma 1]{V6} 
 Let $F$ and $G$ be forests, ${\cal V}F={\cal V}G={\cal N}$, ${\cal D}\subset{\cal N}$. Then graph $F_{\uparrow\cal D}^G$ is a forest if in $F$ the set ${\cal N}^{in}_{\cal D}(F)$ is unreachable from ${\cal N}^{out}_{\cal D}(G)$.  
\end{lemma}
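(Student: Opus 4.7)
The plan is to argue by contradiction: assuming $F_{\uparrow\cal D}^G$ fails to be a forest, I would construct a directed $F$-path from ${\cal N}^{out}_{\cal D}(G)$ to ${\cal N}^{in}_{\cal D}(F)$, in violation of the hypothesis. First I would note that $F_{\uparrow\cal D}^G$ already has at most one arc outgoing from each vertex: those in ${\cal D}$ inherit their outgoing arc from $G$, and those in $\overline{\cal D}$ retain theirs from $F$. So the only way $F_{\uparrow\cal D}^G$ can fail to be a forest is to contain a contour $C$.

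The next step is to rule out the degenerate positions of $C$. If ${\cal V}C\subseteq {\cal D}$, every arc of $C$ has its tail in ${\cal D}$ and is therefore a $G$-arc, producing a contour in $G$, which is impossible. If ${\cal V}C\subseteq \overline{\cal D}$, the same reasoning gives a contour in $F$, again impossible. So $C$ must cross the boundary between ${\cal D}$ and $\overline{\cal D}$ in both directions.

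The heart of the argument is then to read off the forbidden $F$-path from one such crossing. I would pick an arc $(u,v)$ of $C$ with $u\in {\cal D}$ and $v\in \overline{\cal D}$; since its tail lies in ${\cal D}$, it is a $G$-arc, so $v\in {\cal N}^{out}_{\cal D}(G)$. From $v$ follow $C$ forward: while the current vertex lies in $\overline{\cal D}$, its successor arc on $C$ is an $F$-arc. Because $C$ eventually re-enters ${\cal D}$, there is a last vertex $w$ on this $\overline{\cal D}$-stretch whose outgoing arc $(w,z)$ on $C$ satisfies $z\in {\cal D}$. That arc is an $F$-arc entering ${\cal D}$, whence $w\in {\cal N}^{in}_{\cal D}(F)$, and the segment of $C$ from $v$ to $w$ is an $F$-path (possibly trivial if $w=v$), contradicting the unreachability hypothesis.

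I expect the main delicate point to be bookkeeping: which graph an arc of the mixed construction comes from is entirely determined by the side of ${\cal D}$ on which its tail sits, so once this correspondence is pinned down, tracing around the contour mechanically produces the $F$-path. Checking the trivial subcase $w=v$ and confirming that an exiting arc $(u,v)$ of $C$ exists (guaranteed by the previous step) is routine.
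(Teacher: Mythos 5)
Your proof is correct: the only possible failure of $F_{\uparrow\cal D}^G$ is a contour, the two one-sided cases reduce to contours in $G$ or $F$, and tracing the mixed contour from the head $v$ of a $G$-arc leaving ${\cal D}$ to the tail $w$ of the next $F$-arc entering ${\cal D}$ yields exactly the forbidden $F$-path from ${\cal N}^{out}_{\cal D}(G)$ to ${\cal N}^{in}_{\cal D}(F)$ (with the degenerate case $w=v$ covered by the paper's convention that a vertex is reachable from itself). The paper itself does not reproduce a proof, citing Lemma~1 of \cite{V6} instead, and your argument is the standard one that that reference uses.
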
 
\begin{remark} Note that if ${\cal N}^{in}_{\cal D}(F)\cap {\cal N}^{out}_{\cal D}(G)\neq\emptyset$, then these sets are reachable from each other in any forest, since by definition each vertex is considered reachable from itself. In this case, graph $F_{\uparrow\cal D}^G$ may or may not be a forest.
\end{remark}

The simplest special cases of unreachability of ${\cal N}^{in}_{\cal D}(F)$ from ${\cal N}^{out}_{\cal D}(G)$ are situations when in $F$ arcs do not enter ${\cal D}$, or in $G$ arcs do not originate from ${\cal D}$. Then  $F_{\uparrow\cal D}^G$ turns out to be a forest. \cite[Corollaries 1 and 2 from Lemma 1]{V6}.

The situation most in demand for proofs is when both graphs $F_{\uparrow\cal D}^G$ and $G_{\uparrow\cal D}^F$ turn out to be forests.

\begin{property}\cite[Corollaries 3-7 from Lemma 1]{V6}
{\it Let $F$ and $G$ be forests, ${\cal V}F={\cal V}G={\cal N}$, and let $T^F$ and $T^G $ --- some connected components (trees) of forests $F$ and $G$, respectively, ${\cal D}\subset{\cal N}$. Then  graphs $F_{\uparrow\cal D}^G$ and $G_{\uparrow\cal D}^F$ are forests if any of the following points are true:
(a) ${\cal D}={\cal V}T^F$;

(b) ${\cal D}={\cal V}T^F\cap{\cal V}T^G$;

(c) ${\cal D}={\cal V}T^F\setminus{\cal V}T^G$;

(d) ${\cal D}\subset{\cal V}T^F\cap{\cal V}T^G$,  ${\cal N}^{in}_{\cal D}(F)=\emptyset$ and ${\cal N}^{out}_{\cal D}(F)\subset{\cal V}T^F\setminus{\cal V}T^G $;

(e) ${\cal D}\subset{\cal V}T^F\setminus{\cal V}T^G$,  ${\cal N}^{in}_{\cal D}(F)=\emptyset$ and ${\cal N}^{out}_{\cal D}(F)\subset{\cal V}T^G$. }
\end{property}

When constructing minimal forests, the following is useful: 
 \begin{property}\cite[Proposition 1]{V7}
{\it Let $F\in\tilde{\cal F}^n$, $G\in\tilde{\cal F}^m$ and let $m\leq n$ for definiteness. Let also the set ${\cal D}$ be such that graphs $P=F^G_{\uparrow\cal D}$ and $Q=G^F_{\uparrow\cal D}$ are forests, then 

1) if ${\cal D}$ contains the same number of roots of forests $F$ and $G$, then $P\in\tilde{\cal F}^n$ and $Q\in\tilde{\cal F} ^m$;

2) if ${\cal D}$ contains exactly $l=n-m$ more roots of forest $F$ than roots of $G$, then $P\in\tilde{\cal F}^m$ and $Q\in \tilde{\cal F}^n$.}
\end{property}

\section{Subset algebras}

\subsection{Subset algebra generated by a set of forests}

Let ${\cal N}$ be some set of vertices, and let ${\cal F}$ be some set of spanning forests with this set of vertices. Let us introduce the algebra of subsets $\mathfrak{A}_{\cal F}$ of the set ${\cal N}$, which corresponds to the chosen set of forests ${\cal F}$.

We will say that the algebra $\mathfrak{A}_{\cal F}$ is generated by the set of spanning forests ${\cal F}$ if it is generated by the family $\mathfrak{B}_{\cal F}$ consisting of sets ${\cal V}T^F_i$,  $i\in{\cal K}_F$ of tree vertices of forests $F\in{\cal F}$.
Let $\aleph_{\cal F}$ be a family of atoms of this algebra. We call an atom ${\cal A}$ labeled if it contains the root of some forest $F\in{\cal F}$.  We divide the family $\aleph_{\cal F}$ into two disjoint families: the family of labeled atoms $\aleph_{\cal F}^\bullet$ and the family of unlabeled atoms $\aleph_{\cal F}^\circ$.

For any forest $F\in{\cal F}$, the family of sets ${\cal V}T^F_i$,  $i\in{\cal K}_F$ of the vertices of its trees is a partition ${\cal N}$. It follows from this that for a given family of atoms $\aleph_{\cal F}$, the set of forests ${\cal F}$ must have a number of properties. In particular, for any two atoms it is true  

\begin{property}\cite[Lemma 2]{V7}
{\it For any atoms ${\cal A}_1$ and ${\cal A}_2$ of algebra  $\mathfrak{A}_{\cal F}$, there is a forest $F\in{\cal F }$, in which these atoms are contained  in different trees. }
\end{property}

The relative arrangement of three atoms in trees of a forest is described by the following

\begin{figure}[h]
\unitlength=1mm
\begin{center}
\begin{picture}(130,30)

\put(5,20){$\bigcirc$}
\put(18,20){$\bigcirc$}
\put(12,10){$\bigcirc$}
\put(5,24){${\cal A}_1$}
\put(18,24){${\cal A}_2$}
\put(11,5){${\cal A}_3$}
\put(7,23){\oval(10,12)}
\put(20,23){\oval(10,12)}
\put(14,9){\oval(10,12)}
\put(1,0){${\cal F}_*$}

\put(39,20){$\bigcirc$}
\put(54,20){$\bigcirc$}
\put(47,10){$\bigcirc$}
\put(54,15){\oval(17,19)}
\put(41,24){\oval(10,12)}
\put(39,24){${\cal A}_1$}
\put(49,18){${\cal A}_2$}
\put(50,8){${\cal A}_3$}
\put(40,0){${\cal F}_{{\cal A}_1}$}

\put(75,20){$\bigcirc$}
\put(90,20){$\bigcirc$}
\put(82,10){$\bigcirc$}
\put(79,15){\oval(17,18)}
\put(92,24){\oval(10,12)}
\put(90,24){${\cal A}_2$}
\put(79,19){${\cal A}_1$}
\put(77,8){${\cal A}_3$}
\put(73,0){${\cal F}_{{\cal A}_2}$}

\put(110,20){$\bigcirc$}
\put(124,20){$\bigcirc$}
\put(117,10){$\bigcirc$}
\put(119,23){\oval(22,14)}
\put(119,9){\oval(10,12)}
\put(110,24){${\cal A}_1$}
\put(124,24){${\cal A}_2$}
\put(116,5){${\cal A}_3$}
\put(106,0){${\cal F}_{{\cal A}_3}$}

\end{picture} 
\caption{\small Possible arrangement of 3 atoms in  trees of a forest, if they are not all located in one tree at once.}
\label{lem}
\end{center}
\end{figure}
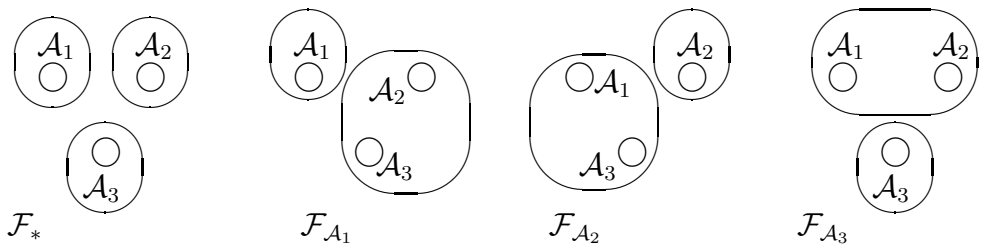

\begin{lemma}
Let ${\cal A}_i\in\aleph_{\cal F}$, $i=1,2,3$, and let the sets ${\cal F}_{{\cal A}_i}\subset{ \cal F}$ consist of forests $F$, in each of which there are two trees such that ${\cal A}_i$ is contained in one of them, and both remaining sets are contained in the other. Let also ${\cal F}_*$ be a set of forests in which all three atoms ${\cal A}_i$ belong to three different trees. Then either ${\cal F}_*\neq \emptyset$ or at most one of the sets ${\cal F}_{{\cal A}_i}$ is empty.
\end{lemma}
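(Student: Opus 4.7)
The plan is to argue the contrapositive: assume that two of the three sets $\mathcal{F}_{\mathcal{A}_i}$ are empty and show that $\mathcal{F}_*$ must then be nonempty. By relabeling we may take $\mathcal{F}_{\mathcal{A}_1} = \mathcal{F}_{\mathcal{A}_2} = \emptyset$; the goal is to exhibit an explicit $F \in \mathcal{F}_*$.

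The first step is to invoke Property~3 applied to the pair of atoms $\mathcal{A}_1$ and $\mathcal{A}_2$: this yields a forest $F \in \mathcal{F}$ in which $\mathcal{A}_1$ and $\mathcal{A}_2$ lie in distinct trees. The next ingredient is the elementary observation that, by the definition of atom in the algebra $\mathfrak{A}_{\mathcal{F}}$ together with the fact that every tree-vertex-set $\mathcal{V}T_j^F$ belongs to that algebra, each atom of $\aleph_{\mathcal{F}}$ is either disjoint from or entirely contained in $\mathcal{V}T_j^F$. Consequently each of $\mathcal{A}_1, \mathcal{A}_2, \mathcal{A}_3$ sits inside exactly one tree of $F$.

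The placement of $\mathcal{A}_3$ in $F$ then splits into three mutually exclusive cases: (i) $\mathcal{A}_3$ shares its tree with $\mathcal{A}_1$, in which case the tree containing $\mathcal{A}_2$ and the tree containing $\mathcal{A}_1,\mathcal{A}_3$ witness that $F \in \mathcal{F}_{\mathcal{A}_2}$; (ii) $\mathcal{A}_3$ shares its tree with $\mathcal{A}_2$, which symmetrically forces $F \in \mathcal{F}_{\mathcal{A}_1}$; (iii) $\mathcal{A}_3$ is in a tree distinct from those of $\mathcal{A}_1$ and $\mathcal{A}_2$, in which case $F \in \mathcal{F}_*$. Under the standing assumption the first two cases are excluded, leaving only (iii), so $\mathcal{F}_* \ni F$ is nonempty, as required.

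The argument is really just a three-way case analysis on the location of the third atom inside the forest produced by the two-atom version (Property~3), and I do not foresee any serious obstacle. The only subtle point to verify is that atoms are indeed tree-contained, which follows immediately from the definition of atom relative to a subset algebra generated by the tree-vertex sets of forests in $\mathcal{F}$.
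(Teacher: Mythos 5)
Your proof is correct and takes essentially the same route as the paper: both apply Property~3 to the two atoms whose sets $\mathcal{F}_{\mathcal{A}_i}$ are assumed empty, and observe that the only remaining placement of the third atom in the resulting forest forces that forest into $\mathcal{F}_*$. Your explicit three-way case analysis on the location of $\mathcal{A}_3$, together with the remark that each atom lies wholly inside one tree, just makes transparent what the paper's contradiction argument states more tersely.
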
 

\begin{proof}
Suppose that ${\cal F}$ does not have a forest in which all three atoms of ${\cal A}_i$ belong to its three different trees, and that ${\cal F}$ does not have two types of forests ${ \cal F}_{{\cal A}_i}$ (see Fig. \ref{lem}).  
For definiteness, let these species be ${\cal F}_{{\cal A}_1}$ and ${\cal F}_{{\cal A}_2}$. Then in the set ${\cal F}$ there are no forests in which the atoms ${\cal A}_1$ and ${\cal A}_2$ belong to different trees, which contradicts Property 3. Therefore, either at least one of the sets ${\cal F}_{{\cal A}_1}$ and ${\cal F}_{{\cal A}_2}$ is non-empty, or the set ${\cal F}_*$ is not empty. 
\end{proof}

\subsection{Subset algebras generated by minimal forests}

We will assume that the original graph $V$ is sufficiently dense, in the sense that there is at least one spanning tree, that is, the set ${\cal F}^1$ of spanning forests consisting of one tree is non-empty. Then the sets ${\cal F}^k$, $k\in \{1, 2, \ldots, N\}$ are also non-empty. 
Let us introduce shortened notations: $\mathfrak{B}_k=\mathfrak{B}_{\tilde{\cal F}^k}$, $\mathfrak{A}_k=\mathfrak{A}_{\tilde{\cal F}^k}$, $\aleph_k=\aleph_{\tilde{\cal F}^k}$, $\aleph_k^\circ=\aleph_{\tilde{\cal F}^k}^\circ$, $\aleph_k^\bullet=\aleph_{\tilde{\cal F}^k}^\bullet$.

Fair  
\begin{property}\cite[Theorem 1]{V7}{\it  The sequence of algebras $\mathfrak{A}_k$ is increasing
$$\{{\cal N},\emptyset\}=\mathfrak{A}_1 \subseteq \mathfrak{A}_2
\subseteq \cdots \subseteq \mathfrak{A}_{N-1} \subseteq \mathfrak{A}_N \ =2^{\cal N} ,$$ where $2^{\cal N}$ --- family of all subsets of the 
set ${\cal N}$.}
\end{property}

Not all algebras are distinct and this depends on how many times the system of convex inequalities \cite{V} 

\begin{equation}
\phi^{k-1}-\phi^k\ge\phi^k- \phi^{k+1}   \label{convex}
\end{equation}
for different $k$ there is an equal sign and a strict inequality sign \cite[Theorem 1]{V6}. In particular  \cite[Theorems 2,3, Corollary 1]{V7}

\begin{property}{\it 
Let it be done \begin{equation}
 \phi^{k-1}-\phi^k =\phi^k- \phi^{k+1} \ , \label{equal}
\end{equation}
then
$\mathfrak{A}_k=\mathfrak{A}_{k+1}$; $\aleph_k^\bullet=\aleph_{k+1}^\bullet$ and 
\begin{equation}
\tilde{\cal F}^{k-1}|_{\cal E}\subseteq \tilde{\cal F}^k|_{\cal E}
\supseteq\tilde{\cal F}^{k+1}|_{\cal E},  \ \  {\cal E}\in\aleph_k. 
\label{al}
\end{equation}}
\end{property}

If the system of convexity inequalities for some $k$ contains the strict inequality
\begin{equation}
 \phi^{k-1}-\phi^k >\phi^k- \phi^{k+1} \ , \label{nonequal}
\end{equation}
then a number of properties are fulfilled. 

\begin{property}\cite[Claim 3]{V7}{\it  
 Let (\ref{nonequal}) hold. Then for any ${\cal A}\in\mathfrak{A}_k$, the number of arcs outgoing from the  vertices  of ${\cal A}$ in all forests of $\tilde{\cal F}^k$ is the same. }
\end{property}

\begin{property}\cite[Theorem 4]{V7}{\it  
 Let (\ref{nonequal}) hold, then $|\aleph_k^\bullet|=k$.}
\end{property}

\begin{property}\cite[Claim 4]{V7}{\it   
 Let (\ref{nonequal}) hold, then each  tree of any forest  $F\in\tilde{\cal F}^k$ contains exactly one labeled atom of the algebra $\mathfrak{A}^k$. }
\end{property}

\begin{property}\cite[Claim 6]{V7}{\it    
Let (\ref{nonequal}) hold, $F\in\tilde{\cal F}^k$ and ${\cal E}\in \aleph_k$. Then there exist a forest $H\in\tilde{\cal F }^k$ such that all arcs outgoing from the vertices of the set ${\cal E}$ in the forests $F$ and $H$ coincide, and there are no arcs entering ${\cal E}$ in $H$. }
\end{property}

\begin{property}\cite[Theorem 5]{V7}{\it   
Let (\ref{nonequal}) hold and ${\cal A}\in\mathfrak{A}_k$.  Then the sum of weights of arcs outgoing from vertices of   ${\cal A}$ is the same for all forests of $\tilde{\cal F}^k$.}
\end{property}
This property allows on the algebra of subsets $\mathfrak{A}_k$ when (\ref{nonequal}) is satisfied, introduce the measure $\rho$:

\begin{equation}
\rho_{\cal A}=\Upsilon^F_{\cal A}, \ {\cal A}\in\mathfrak{A}_k , \ F\in\tilde{\cal F}^k. \label{rhoe}
\end{equation}

\begin{property}\cite[Corollary 2 from Theorem 5]{V7}{\it  
 Let (\ref{nonequal}) hold,  and the family of sets ${\cal A}_i\in\mathfrak{A}_k$ forms a partition ${\cal N}$, then $\phi^k=\underset{i} {\sum}\rho_{{\cal A}_i}$. In particular
 $\phi^k=\underset{{\cal E}\in\aleph_k}{\sum} \rho_{\cal E}$.}
\end{property}

\begin{property}\cite[Claim 7]{V7}{\it  
Let (\ref{nonequal}) hold, $F,G\in\tilde{\cal F}^k$, and let ${\cal A}\in\mathfrak{A}_k$ be such that $H=F^G_{\uparrow{\cal A}}$ is a forest. Then $H\in\tilde{\cal F}^k$.}
\end{property}

\begin{property}\cite[Claim 8]{V7}{\it   
Let (\ref{nonequal}) hold, $F\in\tilde{\cal F}^k$, and let ${\cal A}\in\mathfrak{A}_k$ be such that no arcs enters ${\cal A}$ in $F$. Then for any $G\in\tilde{\cal F}^k$, the graph $H=F^G_{\uparrow{\cal A}}$ belongs to $\tilde{\cal F}^k$. }
\end{property}

\begin{property}\cite[Theorem 6]{V7}{\it   
Let (\ref{nonequal}) hold. Then we have the following statements. 

1) If ${\cal M}\in\aleph_k^\bullet$, and $F$ is a forest having outgoing arcs from all but one vertices of $\cal M$, then $\Upsilon^F_ {\cal M}\ge \rho_{\cal M}$. Moreover, if $\Upsilon^F_{\cal M}= \rho_{\cal M}$, then there exists a forest $H\in\tilde{\cal F}^k$ such that $H|_{\cal M}=F|_{\cal M}$;

2) If ${\cal U}\in\aleph_k^\circ$ and  
$F$ is a forest having outgoing arcs from all vertices of ${\cal U}$, then $\Upsilon^F_{\cal U}\ge \rho_{\cal U}$. 
Moreover, if $\Upsilon^F_{\cal U}= \rho_{\cal U}$, then there exists a forest $H\in\tilde{\cal F}^k$ such that all arcs outgoing from vertices of ${\cal U}$ in  $F$ and $H$ coincide.}
\end{property}

\begin{property}\cite[Theorem 7]{V7}{\it    
Let (\ref{nonequal}) hold, $ F\in\tilde{\cal F}^k$, and ${\cal M}\in \aleph_k^\bullet $. Then the  
induced subgraph $F|_{\cal M}$ is a tree.}
\end{property}

\begin{property}\cite[Theorem 8]{V7}{\it  
Let (\ref{nonequal}) hold, and $ F\in\tilde{\cal
F}^{k-1}$. Then there exist a forest $P\in\tilde{\cal F}^{k}$ and an atom ${\cal M}\in\aleph_k^\bullet$ such that all arcs outgoing from the vertices of  ${\cal N}\setminus{\cal M}$ in the forests $F$ and $P$ coincide, and $F|_{\cal M}$ is a tree.}
\end{property}

\begin{property}\cite[Corollary 3 from Theorem 8]{V7}{\it 
Let (\ref{nonequal}) hold. Then for any unlabeled atom 
${\cal U}\in\aleph_k^\circ$}   
\begin{equation}
\tilde{\cal F}^{k-1}|_{\cal U}\subseteq \tilde{\cal F}^k|_{\cal U},  \ \  {\cal U}\in\aleph_k^\circ. 
\label{uat}
\end{equation}
\end{property}

\section{Formulation of the main statement}
 
\begin{figure}[h]
\unitlength=0.9mm
\begin{center}
\begin{picture}(70,54)

\put(0,36){\line(1,0){30}}
\put(23,48){$x$}
\put(23,29){$y_1$}
\put(23,9){$y_2$}

\put(15,41){${\cal X}$}
\put(15,16){${\cal Y}$}
\put(5,51){$\centerdot$}
\put(5,41){$\centerdot$}
\put(5,31){$\centerdot$}
\put(5,21){$\centerdot$}
\put(5,11){$\centerdot$}
\put(5,1){$\centerdot$}

\put(15,6){$\centerdot$}
\put(15,26){$\centerdot$}
\put(15,46){$\centerdot$}

\put(25,6){$\centerdot$}
\put(25,26){$\centerdot$}
\put(25,46){$\centerdot$}

\put(7,2){\vector(2,1){8}}
\put(7,22){\vector(2,1){8}}
\put(7,42){\vector(2,1){8}}

\put(7,11){\vector(2,-1){8}}
\put(7,31){\vector(2,-1){8}}
\put(7,51){\vector(2,-1){8}}

\put(17,6){\vector(1,0){8}}
\put(17,26){\vector(1,0){8}}
\put(17,46){\vector(1,0){8}}

\put(15,27){\oval(30,54)}

\put(64,48){$x$}

\put(45,51){$\centerdot$}
\put(45,41){$\centerdot$}
\put(45,31){$\centerdot$}
\put(45,21){$\centerdot$}
\put(45,11){$\centerdot$}
\put(45,1){$\centerdot$}

\put(55,6){$\centerdot$}
\put(55,26){$\centerdot$}
\put(55,46){$\centerdot$}

\put(65,6){$\centerdot$}
\put(65,26){$\centerdot$}
\put(65,46){$\centerdot$}

\put(47,2){\vector(2,1){8}}
\put(47,22){\vector(2,1){8}}
\put(47,42){\vector(2,1){9}}

\put(47,11){\vector(2,-1){8}}
\put(47,31){\vector(2,-1){8}}
\put(47,51){\vector(2,-1){8}}

\put(57,6){\vector(1,0){8}}
\put(57,26){\vector(1,0){8}}
\put(57,46){\vector(1,0){8}}

\put(65,7){\vector(-1,2){9}}
\put(65,27){\vector(-1,2){9}}
\put(55,27){\oval(30,54)}

\end{picture} 
\caption{\small Subgraph $F|_{\cal U}$ induced by an atom ${\cal U}\in\mathfrak{A}_k$. 
${\cal X}$ is a set of vertices of a connected component including the root $x$, ${\cal Y}={\cal U}\setminus {\cal X}$. On the left -- the hypothesis is unfair. On the right -- the hypothesis is true, $x$ is the only root of the graph $F|_{\cal U}$, ${\cal X}={\cal U}$.  } 
\label{atom}
\end{center}
\end{figure}
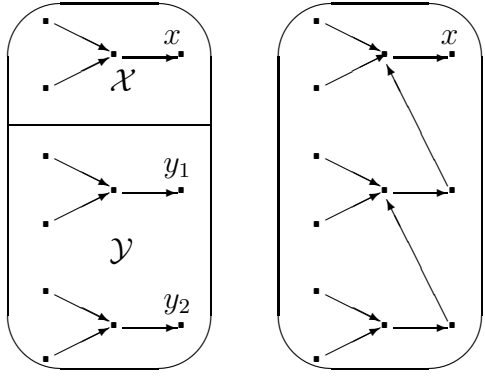

In \cite{V7} it was proven that under (\ref{nonequal}) the forest $F\in\tilde{\cal F}^k$ restricted to a labeled atom of the algebra $\mathfrak{A}_k$ is  a tree (see Property 15). It was also suggested that this property is valid for any atoms, regardless of the sign of the strict or non-strict inequality in (\ref{convex}).   

It will be convenient for us to formulate as a hypothesis the main statement that will be proven.

{\bf Hypothesis.}{\it \ Any forest $F\in\tilde{\cal F}^k$ restricted to an atom of algebra $\mathfrak{A}_k$ is a tree.} 

In Fig.\ref{atom} on the right is a situation corresponding to the hypothesis. 

Let us first make sure that the hypothesis itself needs to be tested only for those $k$ for which the strict inequality holds (\ref{nonequal}). 
 
\section{The case of equality in the system of convexity inequalities}

\begin{proposition}
Let, for some $m$ and $n$, $0<m<n<N$,

\begin{equation}
\phi^{m-1}-\phi^{m}=\phi^{m}-\phi^{m+1}= \ldots= \phi^{n-2}-\phi^{n-1} = \phi^{n-1}- \phi^{n} > \phi^{n}-\phi^{n+1} , 
\label{=>}
\end{equation}
and for index $k=n$ the hypothesis is valid. Then it is valid for any $k=m, m+1, \ldots, n-1$.  
\end{proposition}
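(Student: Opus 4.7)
The plan is to use the chain of equalities in (\ref{=>}) to collapse the algebras and the families of restricted forests down to level $n-1$, and then to invoke Property 16 (Theorem 8) together with the hypothesis at index $n$ to finish off the case $k=n-1$.

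First, applying Property 5 at each $k\in\{m,m+1,\ldots,n-1\}$ (all of which satisfy (\ref{equal})) gives $\mathfrak{A}_m=\mathfrak{A}_{m+1}=\cdots=\mathfrak{A}_n$, so the atom families also coincide, $\aleph_k=\aleph_n$ for $m\le k\le n$. Thus the hypothesis for these $k$ concerns precisely the atoms of $\mathfrak{A}_n$. Next, combining the two inclusions in (\ref{al}) at two consecutive indices $k,k+1\in\{m,\ldots,n-1\}$ yields $\tilde{\cal F}^{k+1}|_{\cal E}\subseteq\tilde{\cal F}^k|_{\cal E}$ (from equality at $k$) and $\tilde{\cal F}^k|_{\cal E}\subseteq\tilde{\cal F}^{k+1}|_{\cal E}$ (from equality at $k+1$), hence $\tilde{\cal F}^k|_{\cal E}=\tilde{\cal F}^{k+1}|_{\cal E}$. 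Chaining these,
\[
\tilde{\cal F}^m|_{\cal E}=\tilde{\cal F}^{m+1}|_{\cal E}=\cdots=\tilde{\cal F}^{n-1}|_{\cal E}\qquad\text{for every }{\cal E}\in\aleph_n.
\]
So for any $F\in\tilde{\cal F}^k$ with $m\le k\le n-1$ there is an $F'\in\tilde{\cal F}^{n-1}$ with $F|_{\cal E}=F'|_{\cal E}$, and it suffices to prove the hypothesis at $k=n-1$.

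To treat $k=n-1$, fix $F\in\tilde{\cal F}^{n-1}$. Because (\ref{nonequal}) holds at index $n$, Property 16 provides a forest $P\in\tilde{\cal F}^n$ and a labeled atom ${\cal M}^*\in\aleph_n^\bullet$ with two properties: all arcs outgoing from vertices of ${\cal N}\setminus{\cal M}^*$ coincide in $F$ and $P$, and $F|_{{\cal M}^*}$ is already a tree. The second property handles the atom ${\cal M}^*$ directly. For every other atom ${\cal E}\in\aleph_n$, disjointness of atoms gives ${\cal E}\subset{\cal N}\setminus{\cal M}^*$, so the arcs outgoing from vertices of ${\cal E}$ coincide in $F$ and $P$; the subset of these whose heads also lie in ${\cal E}$ is exactly the arc set of $F|_{\cal E}$ (respectively of $P|_{\cal E}$), whence $F|_{\cal E}=P|_{\cal E}$. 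By the hypothesis at index $n$, $P|_{\cal E}$ is a tree, and hence so is $F|_{\cal E}$.

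The main obstacle is not technical but structural: one must notice that Property 16, triggered by the strict inequality at $n$, simultaneously (i) handles the one labeled atom ${\cal M}^*$ it selects and (ii) pins $F$ to some $P\in\tilde{\cal F}^n$ outside ${\cal M}^*$ tightly enough to transfer the tree property of $P|_{\cal E}$ to $F|_{\cal E}$ atom by atom. Once this is seen, the collapse in the first paragraph propagates the conclusion from $k=n-1$ down to every $k\in\{m,\ldots,n-1\}$ at no extra cost.
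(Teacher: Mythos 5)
Your proof is correct and follows essentially the same route as the paper: collapse the algebras and the restricted-forest families $\tilde{\cal F}^k|_{\cal E}$ across the chain of equalities via Property 5 and (\ref{al}), reduce everything to $k=n-1$, and then use Property 16 to transfer the tree property from level $n$ down to level $n-1$. The only (harmless) variation is that you extract the transfer for all atoms other than ${\cal M}^*$ directly from the arc-coincidence clause of Property 16, where the paper additionally cites Property 17 for the unlabeled atoms.
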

\begin{proof}
First let's analyze the right side (\ref{=>}) 
\begin{equation*}
\phi^{n-1}- \phi^{n} > \phi^{n}-\phi^{n+1} \  ,
\end{equation*}
which means that for $k=n$ (\ref{nonequal}) is satisfied. From Properties 16 and 17 (\ref{uat}) it follows that if for any $F\in\tilde{\cal F}^n$ its restriction to an atom of the algebra $\mathfrak{A}_n$ is a tree, then the restriction of any forest from $\tilde{\cal F}^{n-1}$ to an atom of the same algebra is also a tree. The rightmost of the equalities (\ref{=>})
 \begin{equation*}
\phi^{n-2}- \phi^{n-1} = \phi^{n-1}-\phi^{n} \  
\end{equation*}
according to Property 5, it means, in particular, that $\mathfrak{A}_{n-1}=\mathfrak{A}_n$. Thus, the restriction of any forest from $\tilde{\cal F}^{n-1}$ to an atom of its own algebra $\mathfrak{A}_{n-1}$ is a tree. The equalities themselves from (\ref{=>}), according to the same Property 5, determine a wider coincidence of algebras: $\mathfrak{A}_m= \mathfrak{A}_{m+1}=\ldots = \mathfrak{A}_n$. Moreover, from (\ref{al}) it follows that the sets of forests induced by the atom of the algebra $\mathfrak{A}_n$ also coincide: 
\begin{equation*}
\tilde{\cal F}^{m}|_{\cal E}= \tilde{\cal F}^{m+1}|_{\cal E}=\ldots = \tilde{\cal F}^{k-1}|_{\cal E},  \ \  {\cal E}\in\aleph_m=\aleph_{m+1}=\ldots =\aleph_n. 
\end{equation*} 
\end{proof} 

\begin{remark} Note that from the leftmost equality in (\ref{=>}) according to (\ref{al}) it also holds $\tilde{\cal F}^{m-1}|_{\cal E}\subseteq \tilde{\cal F}^k|_{\cal E}$,  ${\cal E}\in\aleph_m$. This means, in particular, that for any forest $F$ from $\tilde{\cal F}^{m-1}$ its restriction to an atom of the algebra $\mathfrak{A}_m$ is a tree when fulfilled (\ref {equal}). However, this circumstance itself does not indicate anything about the properties of the restriction of the forest $F$ to the atoms of the corresponding to it algebra $\mathfrak{A}_{m-1}$, which the system (\ref{=>}) does not allow us to judge. 
\end{remark}

\begin{theorem}
If a hypothesis is valid for all $k$ for which (\ref{nonequal}) holds, then it is valid for all $k$.
\end{theorem}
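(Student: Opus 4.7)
The plan is, for arbitrary $k$, to locate an index $n\ge k$ at which strict inequality (\ref{nonequal}) holds and from which Proposition 1 transfers the hypothesis back to $k$. If (\ref{nonequal}) already holds at $k$, the main hypothesis is granted by the assumption of the theorem, and there is nothing to do. Otherwise equality holds at $k$, and I take $n$ to be the smallest index $\ge k$ for which (\ref{nonequal}) is satisfied. Such $n$ exists and satisfies $k<n\le N$: at $j=N$ the convention $\phi^{N+1}=\infty$ makes the inequality $\phi^{N-1}-\phi^N>\phi^N-\phi^{N+1}$ automatic.

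In the generic case $n\le N-1$, the chain
$$\phi^{k-1}-\phi^k \;=\; \phi^k-\phi^{k+1}\;=\;\cdots\;=\;\phi^{n-1}-\phi^n\;>\;\phi^n-\phi^{n+1}$$
is precisely the hypothesis of Proposition 1 with $m=k$. Since the main hypothesis holds at $n$ by the theorem's assumption, Proposition 1 transfers it to every index in $\{k,k+1,\ldots,n-1\}$, and in particular to $k$ itself.

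The remaining possibility is the boundary $n=N$, which lies outside Proposition 1 (whose statement demands $n<N$) and must be treated directly. In this case (\ref{equal}) holds at every index $j=k,k+1,\ldots,N-1$, so iterating Property 5 yields $\mathfrak{A}_k=\mathfrak{A}_{k+1}=\cdots=\mathfrak{A}_N = 2^{\cal N}$. Consequently every atom of $\mathfrak{A}_k$ is a singleton, and the restriction of any $F\in\tilde{\cal F}^k$ to such a singleton is a single vertex with no arcs --- trivially a tree.

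The only real subtlety is noticing that Proposition 1 excludes $n=N$, so that tail case has to be isolated; fortunately the collapse $\mathfrak{A}_k=2^{\cal N}$ makes it vacuous and no additional machinery is required.
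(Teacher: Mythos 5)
Your argument is correct and takes essentially the same route as the paper's own proof: the generic case is delegated to Proposition 1 by locating the first index $n<N$ above $k$ at which (\ref{nonequal}) holds, and the residual tail of equalities running up to $N-1$ is disposed of by collapsing the algebras via Property 5 to $\mathfrak{A}_N=2^{\cal N}$, whose singleton atoms make the hypothesis trivial. No gaps.
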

\begin{proof}
Taking into account the proposition proved above, it remains to consider only the situation when a strict inequality in the convexity system occurs for the first time. Exactly, let it be done
\begin{equation*}
\phi^{n-1}-\phi^n > \phi^n- \phi^{n+1} = \phi^{n+1}-\phi^{n+2} = \ldots = \phi^{N-1}-\phi^N \ , 
\end{equation*}
and for index $k=n$ the hypothesis is valid. 
Due to Property 5 $\mathfrak{A}_{n+1}=\mathfrak{A}_{n+2}=\ldots = \mathfrak{A}_N$. The algebra $\mathfrak{A}_N$ is a Boolean and all its atoms are singletons. A forest induced by a singleton subset is an empty tree.  
\end{proof}

\section{Strict inequality in the system of convexity inequalities}

Let now (\ref{nonequal}) be satisfied. 
First of all, we note that for $k=1$ the strict inequality
\begin{equation*}
\phi^0-\phi^1>\phi^1-\phi^2
\end{equation*}
performed automatically, since $\phi^0=\infty$ due to the absence of forests without roots. The algebra $\mathfrak{A}_1$ is trivial and its only atom is the entire set of vertices ${\cal N}$. The spanning tree, induced by the set  of its own vertices, does not change at all and remains a tree. So for $k=1$ the hypothesis is also satisfied automatically. 

Since in a situation of strictly inequality (\ref{nonequal}) by Property 15 the hypothesis is valid for labeled atoms, then only unlabeled atoms are subject to consideration.

\subsection{Restrictions on arcs outgoing an atom}

Further, everywhere in this section where the hypothesis is assumed to be unfulfilled, the same notations are used. In order not to repeat them every time in the proofs, let us agree that the unlabeled atom ${\cal U}$ belongs to the tree $T^F$ of some forest $F\in\tilde{\cal F}^k$. Several arcs outgo from the atom ${\cal U}$. It is enough to highlight two of them. Let them come from vertices $x$ and $y$. ${\cal X}$ and ${\cal Y}$ are sets of vertices of connected components of the induced subgraph $F|_{\cal U}$ whose roots are the vertices $x$ and $y$, respectively. We also assume that the arc outgoing from $x$ in $F$ enters the atom ${\cal E}$.   
 
The following lemma defines the main restrictions on arcs outgoing from an atom.

 \begin{lemma}
Let (\ref{nonequal}), $F\in\tilde{\cal F}^k$ and ${\cal U}\in \aleph_k^\circ$ be satisfied. In any forest $G\in\tilde{\cal F}^k$, at most one of the entry points of arcs outgoing from ${\cal U}$ in  $F$ is not in the same tree with ${\cal U}$.
\end{lemma}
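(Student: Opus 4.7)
The plan is to argue by contradiction: suppose that for some $G\in\tilde{\cal F}^k$ there are two arcs $(x,x')$ and $(y,y')$ in $F$ leaving $\cal U$ whose heads $x',y'$ both lie outside $T^G$, the tree of $G$ containing $\cal U$. Let $\cal X$ and $\cal Y$ denote the components of $F|_{\cal U}$ rooted at $x$ and $y$. My goal is to construct a forest in $\tilde{\cal F}^k$ in which $\cal X$ and $\cal Y$ end up in two different trees, contradicting the fact that the atom $\cal U\supseteq\cal X\cup\cal Y$ must be contained in a single tree of any forest of $\tilde{\cal F}^k$.

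First I would apply Property 12 to $F$ and the atom $\cal U$, obtaining a forest $F'\in\tilde{\cal F}^k$ with $F'|_{\cal U}=F|_{\cal U}$ and no arcs entering $\cal U$ in $F'$. Set $\cal D=\cal X\cup\cal Y$. Because $F'$ has no arcs into $\cal U$ from outside and because the components $\cal X$ and $\cal Y$ of $F|_{\cal U}$ are separated inside $\cal U$, we have ${\cal N}^{in}_{\cal D}(F')=\emptyset$; the only arcs leaving $\cal D$ in $F'$ are $(x,x')$ and $(y,y')$, so ${\cal N}^{out}_{\cal D}(F')=\{x',y'\}$. Taking $T^{F'}$ as the tree of $F'$ containing $\cal U$ and $T^G$ as above, $\cal D\subset{\cal V}T^{F'}\cap{\cal V}T^G$ and, by the contradiction assumption, $\{x',y'\}\subset{\cal V}T^{F'}\setminus{\cal V}T^G$. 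Thus the hypotheses of Property 2(d) are satisfied.

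By Property 2(d) both $F'^{G}_{\uparrow\cal D}$ and $Q:=G^{F'}_{\uparrow\cal D}$ are forests; since $\cal D\subset\cal U$ contains no roots of $F'$ or $G$, Property 3 gives $Q\in\tilde{\cal F}^k$. In $Q$, the arcs with tail in $\cal D$ are inherited from $F$, so $Q|_{\cal D}=F|_{\cal X}\sqcup F|_{\cal Y}$: two disjoint subtrees rooted at $x$ and $y$, with unique exiting arcs $(x,x')$ and $(y,y')$; outside $\cal D$ the arcs of $Q$ coincide with those of $G$. Following out-arcs in $Q$ from $x$ leads through $x'$ into the tree of $G$ that contained $x'$ and on to its root, and similarly from $y$ one reaches the root of the tree of $G$ containing $y'$. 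When these two $G$-trees are distinct, $\cal X$ and $\cal Y$ lie in distinct trees of $Q$, splitting the atom $\cal U$ across two trees of $Q\in\tilde{\cal F}^k$ --- the desired contradiction.

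The main obstacle is the subcase in which $x'$ and $y'$ happen to lie in the same tree $T'$ of $G$ (necessarily with $T'\neq T^G$); then the single replacement on $\cal X\cup\cal Y$ places $\cal X$ and $\cal Y$ together in the tree of $Q$ rooted at $r^{T'}$ and does not yield a contradiction directly. I expect this subcase to require a refinement: applying Property 12 also to $G$ (producing $G'\in\tilde{\cal F}^k$ with the same out-arcs from $\cal U$ as $G$ but no arcs entering $\cal U$) and then carrying out a finer replacement on $\cal X$ alone, using that in $G'$ the out-arc path of $y$, once it has left $\cal U$, cannot re-enter $\cal U$ and in particular cannot reach $\cal X$. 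Combined with the invariance of labeled atoms (Properties 10 and 11), this should force $\cal Y$ into a tree of the newly constructed minimal forest different from the one containing $\cal X$, producing the contradiction in this residual case as well.
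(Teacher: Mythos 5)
Your argument for the case where $x'$ and $y'$ lie in two \emph{different} trees of $G$ (both distinct from the tree containing ${\cal U}$) is correct, and the single replacement on ${\cal D}={\cal X}\cup{\cal Y}$ is a legitimate variant of what the paper does (modulo the property numbering: the preliminary step is Property 9, forest preservation is Property 1(d), minimality is Property 2). But the subcase you defer --- $x'$ and $y'$ in the same tree $T'$ of $G$ --- is not a marginal residue: it is exactly the configuration needed for Corollary 1 (two arcs from ${\cal U}$ into one atom), and your sketch for it does not close. Two things go wrong. First, replacing $G$ by a forest $G'$ with no arcs entering ${\cal U}$ via Property 9 gives you no control over where the atoms containing $x'$ and $y'$ sit in $G'$; they may end up in the same tree as ${\cal U}$, destroying the hypothesis you are trying to exploit. (The paper makes precisely this point after Lemma 4: one cannot in general demand simultaneously that no arcs enter ${\cal U}$ and that a prescribed atom stay outside the tree of ${\cal U}$; repairing that costs the whole of Section 7.) Second, the key claim that the out-path of $y$ ``cannot reach ${\cal X}$'' is unjustified: ${\cal X}$ is a component of $F|_{\cal U}$, not of $G|_{\cal U}$, so the path from $y$ may enter ${\cal X}$ while still \emph{inside} ${\cal U}$, before it ever leaves the atom; in that event a replacement on ${\cal X}$ alone reattaches $y$ to the same tree as $x$ and yields no contradiction.

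The paper's own proof handles both subcases uniformly and never modifies $G$. One performs the replacement $R=G^F_{\uparrow{\cal X}}$ alone (Property 1(e), with $T^G$ taken to be the $G$-tree containing $x'$): in $R$ the only arc leading from the remnant of the $G$-tree of ${\cal U}$ into $T^G$ is the arc from $x$, so the indivisibility of ${\cal U}$ in the minimal forest $R$ forces ${\cal X}$ to be reachable in $G$ from \emph{every} vertex of ${\cal Y}$. The symmetric replacement on ${\cal Y}$ forces ${\cal Y}$ to be reachable in $G$ from every vertex of ${\cal X}$. These two reachability statements together produce a contour in $G$ passing through ${\cal X}$ and ${\cal Y}$, contradicting that $G$ is a forest. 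You can keep your single-replacement argument for the split case, but you need this two-replacement cycle argument (or an equivalent) to dispose of the case $T_1=T_2$.
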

\begin{proof}

By Property 9, without loss of generality, we can assume that in  $F$ no arcs enter atom ${\cal U}$. Let's prove by contradiction. Suppose that in $F$ several arcs outgo from ${\cal U}$ (two are enough) and there exists a forest $G\in\tilde{\cal F}^k$ in which both arcs originating in $F$ from the vertices $x$ and $y$ of the atom ${\cal U}$ are not in the same tree as ${\cal U}$. In $G$ let the atom ${\cal E}$ (where the arc outgoing from $x$ in $F$ ends) be located in the tree $T^G$. Possible situations are shown in Fig. \ref{pair}. We have  ${\cal X}\subset{\cal V}T^F\setminus{\cal V}T^G$, ${\cal N}^{in}_{\cal X}(F)=\emptyset$, ${\cal N}^{out}_{\cal X}(F)\subset{\cal V}T^G$. By Property 1(e)  graphs $R=G^F_{\uparrow\cal X}$ and $F^G_{\uparrow\cal X}$ are forests. Since the atom ${\cal U}$ is not labeled, it does not contain the roots of any forest from $\tilde{\cal F}^k$. Therefore, in both $F$ and $G$, arcs outgo from all vertices of ${\cal U}$. This means that all vertices of ${\cal X}$ also have arcs outgoing at both $F$ and $G$. Then, by Property 2, $R\in\tilde{\cal F}^k$. By construction in  $R$, the set ${\cal E}$ and the vertex $x$ (and the set ${\cal X}$) are in the same tree. This means that atom ${\cal U}$ is also in the same tree.
  
Note that in  $G$ there are no arcs outgoing from ${\cal V}T^F\setminus{\cal V}T^G$ and entering ${\cal V}T^G$. Thus, in  $R$, the arc outgoing from $x$ is the only one that has an outcome in ${\cal V}T^F\setminus{\cal V}T^G$ and has an end in ${ \cal V}T^G$. This means that the tree of $R$ containing the set ${\cal V}T^G$ and the vertex $x$ (and the set ${\cal X}$) contains the entire set ${\cal U}$ only in the case, if in $G$ the set ${\cal X}$ is reachable from any vertex of the set ${\cal Y}$. By replacing arcs between forests $F$ and $G$, but on the set ${\cal Y}$, we similarly verify that in  $G$ the set ${\cal Y}$ is reachable from any vertex of the set ${\cal X}$.  But then $G$ inevitably contains a contour including some vertices from ${\cal X}$ and some vertices from ${\cal Y}$. This contradicts the fact that $G$ is a forest.
\end{proof}

\begin{figure}[h]
\unitlength=1mm
\begin{center}
\begin{picture}(110,33)

\put(2,29){a)}
\put(4,18){${\cal X}$}
\put(9,18){$x$}
\put(12,19){$\bullet$}
\put(4,12){${\cal Y}$}
\put(9,11){$y$}
\put(12,11){$\bullet$}
\put(9,16){\oval(14,14)}
\put(14,14){\oval(26,26)}
\put(26,22){\oval(20,12)}
\put(26,10){\oval(20,12)}
\put(22,21){\oval(7,7)}
\put(20,20){${\cal E}$}
\put(22,11){\oval(7,7)}
\put(21,10){${\cal S}$}
\put(10,3){$T^F$}
\put(28,20){$T^G$}
\put(28,7){${T'}^G$}
\put(2,16){\line(1,0){14}}
\put(14,20){\vector(1,0){5}}
\put(14,12){\vector(1,0){5}}

\put(40,29){b)}
\put(42,18){${\cal X}$}
\put(47,18){$x$}
\put(50,19){$\bullet$}
\put(42,12){${\cal Y}$}
\put(47,11){$y$}
\put(50,11){$\bullet$}
\put(47,16){\oval(14,14)}
\put(53,13){\oval(28,25)}
\put(61,17){\oval(10,30)}
\put(61,21){\oval(7,7)}
\put(59,20){${\cal E}$}
\put(61,11){\oval(7,7)}
\put(60,10){${\cal S}$}
\put(49,3){$T^F$}
\put(58,26){$T^G$}
\put(40,16){\line(1,0){14}}
\put(52,20){\vector(1,0){6}}
\put(52,12){\vector(1,0){6}} 

\put(77,29){c)}
\put(79,18){${\cal X}$}
\put(84,18){$x$}
\put(87,19){$\bullet$}
\put(79,12){${\cal Y}$}
\put(84,11){$y$}
\put(87,11){$\bullet$}
\put(84,16){\oval(14,14)}
\put(90,13){\oval(30,25)}
\put(98,20){\oval(12,25)}
\put(98,16){\oval(10,10)}
\put(97,14){${\cal E}$}
\put(88,3){$T^F$}
\put(96,26){$T^G$}
\put(77,16){\line(1,0){14}}
\put(89,20){\vector(1,0){7}}
\put(89,12){\vector(1,0){7}}

\end{picture} 
\caption{\small By assumption, in $F$ two arcs outgo from the atom ${\cal U}={\cal X}\cup{\cal Y}$ (these arcs are shown in the figure) and there is a forest in which both ends of these arcs are not in the same tree with ${\cal U}$.}
\label{pair}
\end{center}
\end{figure}
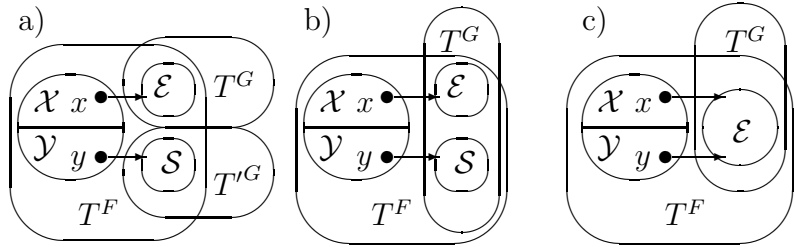

A direct consequence of Lemma 3 is the absence of multiple arcs outgoing from one atom to another.

\begin{corollary} Let (\ref{nonequal}) hold, $ F\in\tilde{\cal F}^k$, ${\cal U}\in \aleph_k^\circ$ and ${\cal E}\in \aleph_k$. Then from ${\cal U}$ at most one arc outgoes ending at ${\cal E}$.
\end{corollary}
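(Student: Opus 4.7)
The plan is to derive this as an immediate corollary of Lemma 3, with Property 3 supplying the witnessing forest needed to force a contradiction. Argue by contradiction: suppose $F$ contains two distinct arcs outgoing from ${\cal U}$ whose entries both lie in ${\cal E}$, say $(x,u)$ and $(y,v)$ with $u,v\in{\cal E}$, $x\neq y$. First observe that necessarily ${\cal E}\neq{\cal U}$, since by the definition of ``outgoing from a set'' the entry of any arc outgoing from ${\cal U}$ lies outside ${\cal U}$.

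Next I would invoke Property 3 applied to the distinct atoms ${\cal U},{\cal E}\in\aleph_k$: it guarantees the existence of some forest $G\in\tilde{\cal F}^k$ in which ${\cal U}$ and ${\cal E}$ are contained in different trees of $G$. In such a $G$, both vertices $u$ and $v$ belong to a tree distinct from the tree that contains ${\cal U}$. Thus at least two of the entry points of arcs outgoing from ${\cal U}$ in $F$ fail to share a tree with ${\cal U}$ in $G$, directly contradicting Lemma 3, which asserts that in any $G\in\tilde{\cal F}^k$ at most one such entry point can be separated from ${\cal U}$. Hence the supposition is impossible and the corollary follows.

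There is no real obstacle here; all the combinatorial work has already been carried out in the proof of Lemma 3, and this corollary merely consists in recognizing that the hypothesis ``two arcs from ${\cal U}$ to ${\cal E}$'' is precisely the configuration Lemma 3 forbids, once one has the freedom (provided by Property 3) to place ${\cal U}$ and ${\cal E}$ in different trees of a minimal forest.
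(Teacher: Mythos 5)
Your proof is correct and follows essentially the same route as the paper: assume two arcs from ${\cal U}$ end in ${\cal E}$, use the separation property of atoms (Property 3 in your numbering; the paper cites it as Property 2, apparently a slip) to obtain a minimal forest $G$ placing ${\cal U}$ and ${\cal E}$ in different trees, and then observe that two entry points of arcs outgoing from ${\cal U}$ fail to share a tree with ${\cal U}$ in $G$, contradicting Lemma 3. Your added remark that ${\cal E}\neq{\cal U}$ (so that the separation property applies) is a small but legitimate point of care that the paper leaves implicit.
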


\begin{proof}
Let us assume the opposite, namely, that from ${\cal U}$ outgo several arcs ending at ${\cal E}$. Since ${\cal U}$ and ${\cal E}$ are atoms, then by Property 2 there is a forest $G\in\tilde{\cal F}^k$ in which they belong to different trees, as shown in fig. \ref{pair} c). However, according to Lemma 3, such a situation is excluded.
\end{proof}

\begin{theorem}
Let (\ref{nonequal}) hold, $ F\in\tilde{\cal F}^k$, ${\cal U}\in \aleph_k^\circ$, ${\cal M}\in \aleph_k^\bullet$. Then, if in  $F$ there is an arc outgoing from ${\cal U}$ and ending in ${\cal M}$, then this arc is the only one outgoing from ${\cal U}$.
\end{theorem}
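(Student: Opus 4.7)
My plan is to proceed by contradiction: suppose there exists a second outgoing arc $(y,y')\in F$ from ${\cal U}$, with $y'\in{\cal E}$. By Corollary~1 this forces ${\cal E}\ne{\cal M}$, and by Property~9 I may assume no arcs enter ${\cal U}$ in $F$. I would first argue that ${\cal E}$ must be unlabeled: if it were labeled, then $T^F$ (the tree of $F$ containing ${\cal U}$) would contain the two distinct labeled atoms ${\cal M},{\cal E}$, contradicting Property~7.

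Next I would apply Lemma~2 to the triple of atoms ${\cal U},{\cal M},{\cal E}$. If either the set ${\cal F}_*$ of forests where all three lie in distinct trees or the set ${\cal F}_{\cal U}$ of forests where ${\cal U}$ is alone while ${\cal M},{\cal E}$ share a tree is non-empty, then in such a $G$ both entries $x'\in{\cal M}$ and $y'\in{\cal E}$ would lie outside ${\cal U}$'s tree, contradicting Lemma~3. So I may assume both ${\cal F}_*$ and ${\cal F}_{\cal U}$ are empty; by Lemma~2 this forces both ${\cal F}_{\cal M}$ and ${\cal F}_{\cal E}$ to be non-empty.

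In this remaining (and main) case, I would pick $G\in{\cal F}_{\cal M}$ with no arcs entering ${\cal U}$ in $G$ (Property~9); then in $G$ the labeled atom ${\cal M}$ occupies its own tree $T^G_{\cal M}$ (with root in ${\cal M}$ by Property~7), while ${\cal U}$ and ${\cal E}$ share a different tree $T^G_{\cal U}$. Let ${\cal X},{\cal Y}$ denote the components of $F|_{\cal U}$ containing $x$ and $y$. Mimicking the proof of Lemma~3, I would set up the swap $R:=G^F_{\uparrow{\cal X}}$. The hypotheses of Lemma~1 hold since ${\cal N}^{in}_{\cal X}(F)=\emptyset$ and ${\cal N}^{out}_{\cal X}(F)=\{x'\}\subset T^G_{\cal M}$ cannot reach ${\cal N}^{in}_{\cal X}(G)\subset T^G_{\cal U}$ in $G$ (distinct trees of $G$), so by Property~2 we have $R\in\tilde{\cal F}^k$. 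In $R$ the only arc entering $T^G_{\cal M}$ from outside is $(x,x')$; hence the tree of $R$ containing $T^G_{\cal M}\cup{\cal X}$ is reachable from outside solely through ${\cal X}$. Since ${\cal U}\in\mathfrak{A}_k$ must lie in a single tree of $R$, every vertex of ${\cal Y}$ must reach ${\cal X}$ in $G|_{\cal U}$ (paths stay in ${\cal U}$ by the no-incoming-arcs assumption). A symmetric swap ${G'}^F_{\uparrow{\cal Y}}$ with $G'\in{\cal F}_{\cal E}$ would give that every vertex of ${\cal X}$ reaches ${\cal Y}$ in $G'|_{\cal U}$.

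The hard part will be combining these two ``cross-reachability'' statements, which live in two different forests $G$ and $G'$, into a single cycle. I expect to close the argument with a further arc swap between $G$ and $G'$ on a suitable subset of ${\cal U}$, producing a single $G^*\in\tilde{\cal F}^k$ in which both reachabilities survive. The alternating chain of ${\cal X}$- and ${\cal Y}$-vertices they force, ${\cal X}\to{\cal Y}\to{\cal X}\to\cdots$ along $G^*|_{\cal U}$-paths, must close up by finiteness of ${\cal N}$, producing a directed contour in $G^*$ and contradicting that $G^*$ is a forest. Verifying the Lemma~1 condition for this combined swap and tracking both reachabilities through it is the technical heart of the argument.
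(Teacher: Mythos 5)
Your opening moves match the paper: contradiction, the observation that ${\cal E}$ must be unlabeled, reduction via Lemma 3 and Lemma 2 to the case where both ${\cal F}_{\cal M}$ (${\cal M}$ isolated, ${\cal U}$ and ${\cal E}$ together) and ${\cal F}_{\cal E}$ (${\cal E}$ isolated, ${\cal U}$ and ${\cal M}$ together) are non-empty. But from there you take the wrong branch and the proof does not close. The paper works with a forest $G$ of the \emph{second} type (${\cal E}$ in a tree $T^G$ separate from the tree containing ${\cal U}$ and ${\cal M}$) and performs a \emph{single} swap on the whole atom, $Q=G^F_{\uparrow{\cal U}}$. The hypothesis of Lemma 1 is then checkable directly: one $F$-arc out of ${\cal U}$ lands in the labeled atom ${\cal M}$, from which nothing is reachable in $G$ because no arcs leave ${\cal M}$ in any forest of $\tilde{\cal F}^k$ under (\ref{nonequal}); the other lands in $T^G$, a different component from the tree containing ${\cal N}^{in}_{\cal U}(G)$. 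So $Q$ is a forest, hence minimal by Property 12 (here ${\cal U}\in\mathfrak{A}_k$, so Property 12 applies even though we do not yet know ${\cal U}$ is connected in $F$), and by construction $Q$ splits ${\cal U}$ between two trees — contradicting atomicity. This is exactly the point where the labeledness of ${\cal M}$ is essential (the paper's Remark 4), and it is the insight your plan misses: you never exploit that ${\cal M}$ emits no arcs.

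Two concrete defects in your route. First, you "pick $G\in{\cal F}_{\cal M}$ with no arcs entering ${\cal U}$ in $G$ (Property 9)." Property 9 produces a forest with no arcs entering ${\cal U}$, but it gives no control over which atoms remain in a separate tree from ${\cal U}$ afterwards; the paper explicitly warns that these two requirements cannot in general be imposed simultaneously, and establishing their compatibility is the content of the much harder Theorem 4 (needed only for the main theorem, not for this one). The paper's proof of the present statement deliberately avoids assuming ${\cal N}^{in}_{\cal U}(G)=\emptyset$. Second, and decisively, your endgame — merging the two cross-reachability statements, which live in two different forests $G$ and $G'$, into one contour via a further swap — is not carried out; you yourself label it "the technical heart." As written there is no candidate set ${\cal D}$, no verification of Lemma 1 for the combined swap, and no argument that both reachabilities survive it, so the contradiction is never reached. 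The proposal is therefore incomplete, and the missing part is the hard part.
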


\begin{proof}

Let us show by contradiction that an unlabeled atom cannot produce two arcs into different atoms, one of which is labeled. Let in  $F$ an arc outgoes from an unlabeled atom ${\cal U}$ into a labeled atom ${\cal M}$ (its origin is a vertex $y$) and an arc outgoes into an unlabeled atom ${\cal E} $. By Property 9, without loss of generality, we assume that in the forest $F$ no arcs enter the atom ${\cal U}$.

\begin{figure}[h]
\unitlength=1mm
\begin{center}
\begin{picture}(130,30)
\put(5,20){$\bigcirc$}
\put(18,20){$\bigcirc$}
\put(12,10){$\bigodot$}
\put(5,24){${\cal U}$}
\put(19,24){${\cal E}$}
\put(11,5){${\cal M}$}
\put(7,23){\oval(10,12)}
\put(20,23){\oval(10,12)}
\put(14,9){\oval(10,12)}
\put(1,0){$a)$}

\put(39,20){$\bigcirc$}
\put(54,20){$\bigcirc$}
\put(46,10){$\bigodot$}
\put(54,15){\oval(17,19)}
\put(41,24){\oval(10,12)}
\put(40,24){${\cal U}$}
\put(51,19){${\cal E}$}
\put(51,8){${\cal M}$}
\put(40,0){$b)$}

\put(75,20){$\bigcirc$}
\put(90,20){$\bigcirc$}
\put(81,10){$\bigodot$}
\put(79,15){\oval(17,18)}
\put(92,24){\oval(10,12)}
\put(90,24){${\cal E}$}
\put(80,19){${\cal U}$}
\put(75,8){${\cal M}$}
\put(73,0){$c)$}

\put(110,20){$\bigcirc$}
\put(124,20){$\bigcirc$}
\put(117,10){$\bigodot$}
\put(119,23){\oval(22,14)}
\put(119,9){\oval(10,12)}
\put(111,24){${\cal U}$}
\put(125,24){${\cal E}$}
\put(116,5){${\cal M}$}
\put(106,0){$d)$}

\end{picture} 
\caption{\small If we assume that in some forest the atom ${\cal U}$ has arcs ending in ${\cal E}$ and in ${\cal M}$, then by Lemma 3 there are not forests of the types $a)$ and $b) $, and then by Lemma 2 there must exist forests of both type $c)$ and type $d)$.}
\label{lemma}
\end{center}
\end{figure}
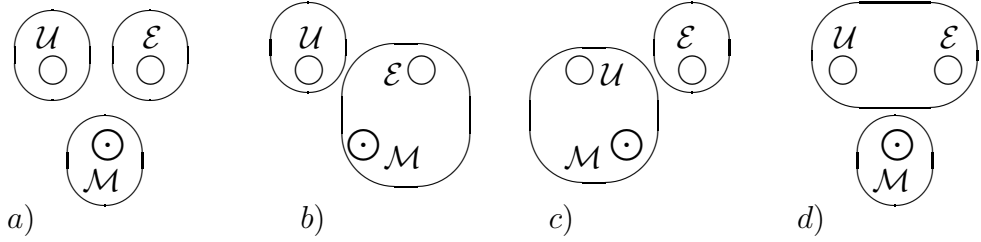

Assumption that there is a forest $G\in\tilde{\cal F}^k$ in which ${\cal U}$ is not in the same tree with both ${\cal E}$ and ${\cal M }$ (see Fig. \ref{lemma} options $a)$ and $b)$), is in contradiction with Lemma 3.
Then, by Lemma 2, there are forests of both type $c)$ and type $d)$ from Fig.\ref{lemma}. Let us make sure that option $c)$ leads to a contradiction. Indeed, let in $G\in\tilde{\cal F}^k$  atoms ${\cal E}$ and ${\cal U}$ be in different trees: for definiteness, in trees $T^G$ and ${T'}^G$ respectively, and ${\cal E}$ does not belong to the tree rooted at ${\cal M}$ (see  Fig.\ref{th4} on the left).
Consider graph $Q=G^F_{\uparrow{\cal U}}$. We have: in $F$ one arc outgoing from ${\cal U}$ enters atom ${\cal M}$, which is labeled, and arcs from it do not outgo in any forest from $\tilde{ \cal F}^k$ because satisfied (\ref{nonequal}). Another arc outgoing from ${\cal U}$ in $F$ enters the tree ${T}^G$. But in $G$ there are no arcs outgoing from the set of vertices of this tree (or any other), nor are there any arcs entering ${T'}^G$. But it is precisely this tree in $G$ that contains  both the atom ${\cal U}$ and the set ${\cal N}^{in}_{\cal U}(G)$ (in $G$ there may be arcs entering the atom ${\cal U}$). Thus, no sequence of arcs from ${\cal N}^{out}_{\cal U}(F)$ leads in $G$ to the set ${\cal N}^{in}_{\cal U}(G)$. Then, by Lemma 1, the graph $Q$ is a forest. By Property 12 $Q\in\tilde{\cal F}^k$. However, by construction in  $Q$, parts of the set ${\cal U}$ are in different trees (see Fig.\ref{th4} on the right), which is impossible, since ${\cal U}$ is an atom.
\end{proof}
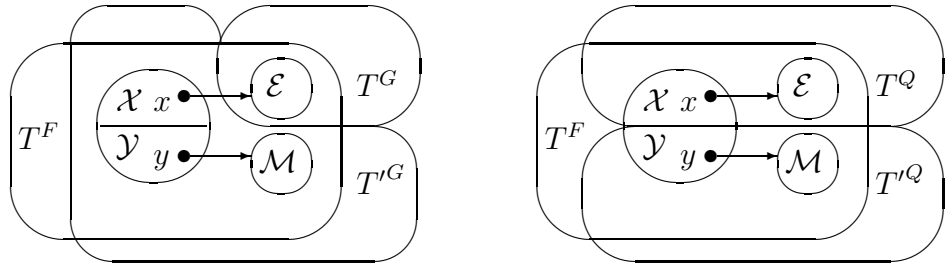
\begin{figure}[h]
\unitlength=1mm
\begin{center}
\begin{picture}(125,33)
\put(14,20){${\cal X}$}
\put(19,20){$x$}
\put(22,21){$\bullet$}
\put(14,14){${\cal Y}$}
\put(19,13){$y$}
\put(22,13){$\bullet$}
\put(19,18){\oval(15,15)}
\put(22,16){\oval(44,26)}
\put(31,6){\oval(46,12)[b]}
\put(41,12){\oval(26,12)[tr]}
\put(18,29){\oval(20,10)[t]}
\put(8,6){\line(0,1){23}}
\put(54,6){\line(0,1){6}}
\put(41,26){\oval(27,16)}
\put(36,23){\oval(8,8)}
\put(34,22){${\cal E}$}
\put(36,13){\oval(8,8)}
\put(33,12){${\cal M}$}
\put(1,15){$T^F$}
\put(46,22){${T}^G$}
\put(46,9){${T'}^G$}
\put(12,18){\line(1,0){14}}
\put(24,22){\vector(1,0){8}}
\put(24,14){\vector(1,0){8}} 

\put(84,20){${\cal X}$}
\put(89,20){$x$}
\put(92,21){$\bullet$}
\put(84,14){${\cal Y}$}
\put(89,13){$y$}
\put(92,13){$\bullet$}
\put(89,18){\oval(15,15)}
\put(92,16){\oval(44,26)}
\put(100,9){\oval(48,18)}
\put(100,26){\oval(48,16)}
\put(106,23){\oval(8,8)}
\put(104,22){${\cal E}$}
\put(106,13){\oval(8,8)}
\put(103,12){${\cal M}$}
\put(71,15){$T^F$}
\put(115,22){${T}^Q$}
\put(115,9){${T'}^Q$}
\put(82,18){\line(1,0){14}}
\put(94,22){\vector(1,0){8}}
\put(94,14){\vector(1,0){8}} 

\end{picture} 
\caption{\small By assumption, two arcs in $F$ outgo from atom ${\cal U}={\cal X}\cup{\cal Y}$ and one of them enters the labeled atom ${\cal M}$. Then in the forest $Q=G^F_{\uparrow{\cal U}}$ (on the right) the sets ${\cal X}$ and ${\cal Y}$ are in different trees.}
\label{th4}
\end{center}
\end{figure}   

\begin{remark}
In the above proof, it is essential that ${\cal M}$ is a labeled atom. Arcs do not come from it under condition  (\ref{nonequal}). Thus, in all forests from $\tilde{\cal F}^k$ not a single atom is reachable from ${\cal M}$. Thanks, in particular, to this circumstance, it is possible to use Lemma 1.
\end{remark}

\begin{lemma}
Let (\ref{nonequal}) hold, ${\cal U},{\cal E},{\cal S} \in \aleph_k^\circ$, and there is a forest $G\in\tilde{\cal F} ^k$ such that in it arcs do not enter ${\cal U}$ and one of the atoms ${\cal E}$ or ${\cal S}$ is not in the same tree as ${\cal U }$. Then in any forest $F\in\tilde{\cal F}^k$ there is at most one arc outgoing from ${\cal U}$ to ${\cal E}\cup {\cal S}$.
\end{lemma}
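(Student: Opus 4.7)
The plan is a proof by contradiction. Suppose some $F\in\tilde{\cal F}^k$ contains two arcs outgoing from ${\cal U}$ and ending in ${\cal E}\cup{\cal S}$. By Corollary 1 at most one such arc terminates in ${\cal E}$ and at most one in ${\cal S}$, so these are exactly an arc $(x,e)$ with $e\in{\cal E}$ and an arc $(y,s)$ with $s\in{\cal S}$, and $x\neq y$ are vertices of ${\cal U}$. Let $G$ be the forest supplied by the hypothesis; without loss of generality ${\cal E}$ lies in a tree of $G$ different from that of ${\cal U}$. Apply Lemma 3 to $F$ and $G$: at most one entry point of an arc outgoing from ${\cal U}$ in $F$ can lie outside the tree of ${\cal U}$ in $G$. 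Since $e$, and hence ${\cal E}$, already occupies this slot, the point $s$ (together with the whole atom ${\cal S}$) must share the tree of ${\cal U}$ in $G$.

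Denote by $T^G_1$ the tree of $G$ containing ${\cal U}\cup{\cal S}$, with root $r_1$, and by $T^G_2$ the tree containing ${\cal E}$, with root $r_2\neq r_1$. Consider $Q=G^F_{\uparrow{\cal U}}$: since no arcs of $G$ enter ${\cal U}$, Property 13 (applied with $G$ playing the role of $F$ there) yields $Q\in\tilde{\cal F}^k$. Inside ${\cal U}$ the graph $Q$ agrees with $F$, so the components ${\cal X}$ and ${\cal Y}$ of $F|_{\cal U}$ containing $x$ and $y$ remain disjoint pieces of ${\cal U}$ in $Q$. Following $(x,e)\in Q$ and then ascending $T^G_2$, whose internal arcs are unchanged in $Q$, places ${\cal X}$ in the tree of $Q$ rooted at $r_2$. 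Following $(y,s)$ and ascending within $T^G_1\setminus{\cal U}$, which is possible because no arcs enter ${\cal U}$ in $G$ (so the ascent from $s$ avoids ${\cal U}$ all the way to $r_1$), places ${\cal Y}$ in the tree of $Q$ rooted at $r_1$.

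Since $r_1\neq r_2$, the sets ${\cal X}$ and ${\cal Y}$ belong to different trees of $Q$. On the other hand ${\cal U}$ is an atom of $\mathfrak{A}_k$ and each tree-vertex-set of $Q\in\tilde{\cal F}^k$ lies in $\mathfrak{A}_k$, so ${\cal U}$ must be contained in a single tree of $Q$; this contradicts the previous split. The crux, and main obstacle, is invoking Lemma 3 to force ${\cal S}$ into the same tree as ${\cal U}$ in $G$; without this reduction, the ascent from $s$ in $Q$ could reach $r_2$ and the $Q$-construction would not necessarily separate the two pieces of ${\cal U}$.
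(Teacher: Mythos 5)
Your proof is correct and follows essentially the same route as the paper: reduce by Corollary 1 and Lemma 3 to the configuration where ${\cal E}$ is outside and ${\cal S}$ inside the $G$-tree of ${\cal U}$, form $Q=G^F_{\uparrow{\cal U}}\in\tilde{\cal F}^k$ via Property 13, and derive a contradiction from ${\cal X}$ and ${\cal Y}$ landing in different trees of $Q$ while ${\cal U}$ is an atom. Your explicit tracing of the two ascents to the distinct roots $r_1\neq r_2$ merely spells out what the paper leaves to its figure.
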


\begin{proof}
Suppose that in some forest $F\in\tilde{\cal F}^k$ two arcs outgo from the unlabeled atom ${\cal U}$ to the unlabeled atoms ${\cal E}$ and ${\cal S} $. In $F$, an arc outgoing from ${\cal U}$ to ${\cal E}$ has the starting point $x$, and an arc outgoing from ${\cal U}$ to ${\cal S }$, has the starting point at vertex $y$.

\begin{figure}[h]
\unitlength=1mm
\begin{center}
\begin{picture}(125,33)
\put(14,20){${\cal X}$}
\put(19,20){$x$}
\put(22,21){$\bullet$}
\put(14,14){${\cal Y}$}
\put(19,13){$y$}
\put(22,13){$\bullet$}
\put(19,18){\oval(15,15)}
\put(22,16){\oval(44,26)}
\put(31,6){\oval(46,12)[b]}
\put(41,12){\oval(26,12)[tr]}
\put(18,29){\oval(20,10)[t]}
\put(8,6){\line(0,1){23}}
\put(54,6){\line(0,1){6}}
\put(41,26){\oval(27,16)}
\put(36,23){\oval(8,8)}
\put(34,22){${\cal E}$}
\put(36,13){\oval(8,8)}
\put(34,12){${\cal S}$}
\put(1,15){$T^F$}
\put(46,22){${T}^G$}
\put(46,9){${T'}^G$}
\put(12,18){\line(1,0){14}}
\put(24,22){\vector(1,0){8}}
\put(24,14){\vector(1,0){8}} 

\put(84,20){${\cal X}$}
\put(89,20){$x$}
\put(92,21){$\bullet$}
\put(84,14){${\cal Y}$}
\put(89,13){$y$}
\put(92,13){$\bullet$}
\put(89,18){\oval(15,15)}
\put(92,16){\oval(44,26)}
\put(88,6){\oval(20,12)[b]}
\put(111,27){\oval(26,18)[br]}
\put(101,27){\oval(46,12)[t]}
\put(78,6){\line(0,1){23}}
\put(111,9){\oval(26,18)}
\put(106,23){\oval(8,8)}
\put(104,22){${\cal E}$}
\put(106,13){\oval(8,8)}
\put(104,12){${\cal S}$}
\put(71,15){$T^F$}
\put(115,22){${T}^H$}
\put(115,9){${T'}^H$}
\put(82,18){\line(1,0){14}}
\put(94,22){\vector(1,0){8}}
\put(94,14){\vector(1,0){8}} 

\end{picture} 
\caption{\small By assumption, two arcs outgo in $F$ from the atom ${\cal U}={\cal X}\cup{\cal Y}$. Situations are depicted for forests in which one of the entries of these arcs is in the same tree as ${\cal U}$, and the other is not.}
\label{hip}
\end{center}
\end{figure}
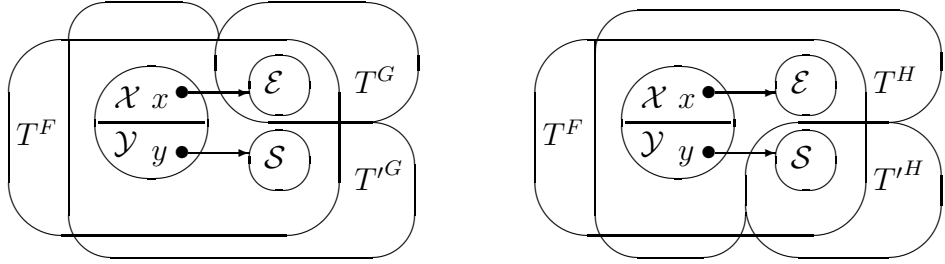

Due to the symmetry of the situation with respect to the atoms ${\cal E}$ and ${\cal S}$, we assume for definiteness that the conditions of the lemma are satisfied by the forest $G$ shown in Fig. \ref{hip} on the left. 
 Since in $G$ arcs do not enter ${\cal U}$, then by Property 13 the forest $Q=G^F_{\uparrow{\cal U}}$ belongs to the set $\tilde{\cal F}^ k$. In $Q$, no arcs enter the set ${\cal X}$, since in $F$ there were no arcs with an origin in ${\cal Y}$ and an entry in ${\cal X}$, and  $G$ has no arcs going into ${\cal U}$.  But then, by construction in $Q$, the sets ${\cal X}$ and ${\cal Y}$ are in different trees (see Fig. \ref{th4} on the right with ${\cal M}$ replaced by ${\cal S}$), which contradicts the fact that the set ${\cal U}={\cal X}\cup{\cal Y}$ is an atom.
 \end{proof}

To fully justify the hypothesis, it is enough to prove that forest $G$ required in Lemma 4 certainly exists.

In the formulation of Lemma 4 it was required that no arcs enter the atom ${\cal U}$ (which allows us to use Property 13). It would seem that Property 9 allows us to get rid of this limitation and fully justify the hypothesis. However, alas, this is not entirely true. In the forest $F$, without loss of generality (by virtue of Property 9), we can assume that arcs do not enter the atom ${\cal U}$ (but this is not required for the proof). But with the forest $G$ the situation is different. It must satisfy an important special condition. Namely, that at least one of the atoms ${\cal E}$ or ${\cal S}$ should not be in the same tree with ${\cal U}$. Therefore, it is, generally speaking, impossible to additionally demand that in $G$ no arcs enter the atom ${\cal U}$. Let us make sure that, nevertheless, both requirements are compatible.

\subsection{Connected components of the restriction of a minimal forest to atoms of algebra $\mathfrak{A}_k$}

Let us find out what properties any connected component has (without assuming that it is the only one) of the subgraph $F|_{\cal U}$ generated by  unlabeled atom ${\cal U}$. Since we do not know in advance the structure of  (unlabeled) atoms of algebra $\mathfrak{A}_k$, we are forced to assume that the induced subgraph $F|_{\cal U}$ itself is a forest consisting of a certain number of connected components ( see fig. \ref{atom} on the left). From every root of these components in $F$ outgoes the single arc. The entry of each of these arcs does not belong to the set ${\cal U}$, since if there were a direct connection between the two components, then they would constitute a single component.

\begin{theorem}
Let (\ref{nonequal}) hold, $F\in\tilde{\cal F}^k$, ${\cal U}\in\aleph_k^\circ$ and ${\cal X}$ is the set of vertices of an arbitrary connected component of the induced subgraph $F|_{\cal U}$. Then 

1) there is a forest $H\in\tilde{\cal F}^k$ such that all arcs outgoing from the vertices of the set ${\cal X}$ in the forests $F$ and $H$ coincide, and the set ${\cal X}$ has no entering arcs.

2) for any $G\in\tilde{\cal F}^k$ is satisfied

\begin{equation}
\Upsilon^G_{\cal X}=\Upsilon^F_{\cal X} \ .  
\label{gxfx}
\end{equation}
If in this case graph $F'=F^G_{\uparrow{\cal X}}$ is a forest (in particular, when in  $F$ there are no arcs entering ${\cal X}$), then $ F'\in\tilde{\cal F}^k$.

3) if $G$ is a forest in which arcs outgo from all vertices of the set ${\cal X}$, then $\Upsilon^G_{\cal X}\ge \Upsilon^F_{\cal X}$. 
Moreover, if $\Upsilon^G_{\cal X}= \Upsilon^F_{\cal X} $, then there is a forest $F'\in\tilde{\cal F}^k$ such that all arcs outgoing from the vertices of the set ${\cal X}$ in the forests $G$ and $F'$ coincide. 
\end{theorem}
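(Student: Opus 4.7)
The plan is to derive all three parts from a single construction: first use Property~9 to produce a convenient $H\in\tilde{\cal F}^k$ for the whole atom ${\cal U}$, and then, for any target forest $G$, analyze the replacement graph $R=H^G_{\uparrow{\cal X}}$.

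For part~1 I would apply Property~9 to the atom ${\cal U}$ and the forest $F$. This yields $H\in\tilde{\cal F}^k$ in which the outgoing arc of each $v\in{\cal U}$ is the same as in $F$ and no arc with tail outside ${\cal U}$ enters ${\cal U}$. Since ${\cal X}$ is a (weakly) connected component of $F|_{\cal U}$, no arc of $F$ goes from ${\cal U}\setminus{\cal X}$ into ${\cal X}$ (such an arc would merge the two components); as the outgoing arcs of every vertex of ${\cal U}\setminus{\cal X}$ agree in $F$ and $H$, no such arc is present in $H$ either. Together with Property~9's guarantee about external arcs into ${\cal U}$, this gives ${\cal N}^{in}_{\cal X}(H)=\emptyset$, while the outgoing arcs of ${\cal X}$ in $F$ and $H$ coincide by construction.

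For part~2 I fix $G\in\tilde{\cal F}^k$ and form $R=H^G_{\uparrow{\cal X}}$. The emptiness of ${\cal N}^{in}_{\cal X}(H)$ makes Lemma~1 applicable, so $R$ is a forest. Because ${\cal U}$ is unlabeled, every vertex of ${\cal X}$ carries an outgoing arc in both $H$ and $G$, so $R$ has the same root set as $H$; hence $R\in{\cal F}^k$ and $\Upsilon^R=\phi^k-\Upsilon^F_{\cal X}+\Upsilon^G_{\cal X}$. Minimality of $\phi^k$ then forces $\Upsilon^G_{\cal X}\ge\Upsilon^F_{\cal X}$. Summing this inequality over all components ${\cal X}_1,\ldots,{\cal X}_p$ of $F|_{\cal U}$ produces $\Upsilon^G_{\cal U}\ge\Upsilon^F_{\cal U}$, but both sides equal $\rho_{\cal U}$ by Property~15, so equality holds in every summand, yielding \eqref{gxfx}. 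If in addition $F'=F^G_{\uparrow{\cal X}}$ is a forest, the same root count places $F'\in{\cal F}^k$, and $\Upsilon^{F'}=\phi^k-\Upsilon^F_{\cal X}+\Upsilon^G_{\cal X}=\phi^k$ puts $F'\in\tilde{\cal F}^k$. Part~3 is the identical construction with $G$ an arbitrary forest whose vertices of ${\cal X}$ all have outgoing arcs: the same argument gives $R\in{\cal F}^k$ and the inequality $\Upsilon^G_{\cal X}\ge\Upsilon^F_{\cal X}$, and when equality holds $R\in\tilde{\cal F}^k$ has outgoing arcs from ${\cal X}$ equal to those of $G$ by construction, so $F'=R$ is the required forest.

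The main obstacle I anticipate is precisely part~1: showing that the forest $H$ supplied by Property~9 for the full atom ${\cal U}$ already enjoys the stronger property of admitting no arcs into the smaller set ${\cal X}$. Once this step is in hand, the same $R$-construction discharges both parts~2 and~3, and the only additional content is the global partition-sum argument that upgrades the pointwise inequality to the equality asserted in \eqref{gxfx}.
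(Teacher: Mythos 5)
Your proposal is correct and follows essentially the same route as the paper: Property~9 supplies the auxiliary forest with no arcs entering ${\cal U}$ (hence none entering the component ${\cal X}$), Lemma~1 plus the unlabeledness of ${\cal U}$ makes each replacement graph a forest in ${\cal F}^k$, minimality yields the per-piece inequality $\Upsilon^G_{\cal X}\ge\Upsilon^F_{\cal X}$, and the invariance of $\Upsilon_{\cal U}$ over $\tilde{\cal F}^k$ forces equality --- the paper splits ${\cal U}$ into ${\cal X}$ and its complement ${\cal Y}={\cal U}\setminus{\cal X}$ rather than summing over all components, which is an immaterial difference. One small correction: the fact that $\Upsilon^G_{\cal U}=\Upsilon^F_{\cal U}=\rho_{\cal U}$ is Property~10 (Theorem~5 of the cited work), not Property~15.
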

 
\begin{proof}

1) According to Property 9, there is a forest $H\in\tilde{\cal F}^k$ in which no arcs enter the atom ${\cal U}$, and arcs outgoing from the vertices of ${\cal U}$ are the same , as in $F$. But then there are no arcs in $H$ going into ${\cal X}$. Indeed, by condition, there are no arcs in $F$ with an outcome in ${\cal Y}={\cal U}\setminus {\cal X}$ and an entry in ${\cal X}$.

2) Let $G\in\tilde{\cal F}^k$. To check equality (\ref{gxfx}), without loss of generality, we can assume from Property 9 that in $F$ no arcs enter the atom ${\cal U}$. Note that 
then it has no incoming arcs not only in ${\cal X}$, but also in ${\cal Y}$. Then, by Lemma 1,  graphs $F'=F^G_{\uparrow{\cal X}}$ and $F''=F^G_{\uparrow{\cal Y}} $ are forests. And they belong to ${\cal F}^k$, since in both $F$ and $G$ arcs outgo from all vertices of the unlabeled atom ${\cal U}$.  Therefore, weights of these forests are not less than the weight of $F$. Forests $F'$ and $F''$ differ from  $F$ only in arcs outgoing from the vertices of the  sets ${\cal X}$ and ${\cal Y}$, respectively. Means   

\begin{equation}
\Upsilon^{F}_{\cal X}\leq \Upsilon^{F'}_{\cal X}, \ \  \Upsilon^{F}_{\cal Y}\leq \Upsilon^{F''}_{\cal Y} \ .
\label{xy}
\end{equation}
By construction, $\Upsilon^{F'}_{\cal X}= \Upsilon^{G}_{\cal X}$ and $\Upsilon^{F''}_{\cal Y}= \Upsilon^ {G}_{\cal Y}$, and by Property 10 $\Upsilon^{G}_{\cal U}= \Upsilon^F_{\cal U}$. So  

\begin{equation*}
\Upsilon^{F'}_{\cal X}+\Upsilon^{F''}_{\cal Y} =\Upsilon^{G}_{\cal X}+\Upsilon^{G}_{\cal Y}=\Upsilon^{G}_{\cal U}= \Upsilon^F_{\cal U}=\Upsilon^{F}_{\cal X}+\Upsilon^{F}_{\cal Y} \ , 
\end{equation*}
from which, taking (\ref{xy}) into account, it follows (\ref{gxfx}). That is, both forests $F'$ and $F''$ are minimal. 

3) Let $G$ be a forest in which arcs outgo from all vertices of the set ${\cal X}$.  Then, similarly to the previous point, for the forest $F'=F^G_{\uparrow{\cal X}}$ we have $F'\in {\cal F}^k$ and $\Upsilon^{F}_{\cal X}\leq \Upsilon^{F'}_{\cal X}=\Upsilon^{G}_{\cal X}$. Let now $\Upsilon^G_{\cal X}= \Upsilon^F_{\cal X} $. Thus, $\Upsilon^{F'}_{\cal X}= \Upsilon^F_{\cal X}$. Since forests $F'$ and $F$ differ only in the arcs outgoing from the vertices of the set ${\cal X}$, we conclude that $\Upsilon^F=\Upsilon^{F'}$. Thus, $F'\in\tilde{\cal F}^k$.
\end{proof}

The equalities (\ref{gxfx}) mean that the measure $\rho$ (\ref{rhoe}), minimally defined on the atoms of the algebra $\mathfrak{A}_k$, can be extended to more refined subsets of any unlabeled atom $ {\cal U}$ if the corresponding induced subgraph $F|_{\cal U}$ is disconnected, namely --- on the sets of vertices of its connected components.   This theorem actually reproduces for such sets Properties 9-14 that  elements of the algebra $\mathfrak{A}_k$ and its atoms have (thus, the theorem indirectly indicates that $F|_{\cal U}$ is connected ).  
And the presence of these properties will allow  
construct the forest used in Lemma 4.     

\section{Building a forest with special properties} 

To construct the required forest, the existing properties of the atoms of the algebra $\mathfrak{A}_k$ are not enough. Without further consideration, everything that can be extracted from the fact that the atom is indivisible has already been done. 

The family $\aleph_k$ is a partition of the vertex set ${\cal N}$, but for this partition it is unknown whether the graph $F|_{\cal U}$ is connected for ${\cal U}\in\aleph_k$ ( in fact, the work lies in checking this circumstance). Let's approach the issue from the other side. We will immediately consider a partition $\mathfrak{N}$ of the tree $T$ (or forest $F$) such that $T|_{\cal X}$ is a tree, for ${\cal X}\in \mathfrak{N}$. Having found out the properties of such a partition, we will be able to construct the forest appearing in Lemma 4.

The next two paragraphs are devoted to some properties of trees (forests) regardless of the presence of a weight function.
\subsection{Simple properties of trees (forests)}

First of all, let's note the following. Forest (incoming) is a graph in which at most one arc outgoes from each vertex and there are no contours in the graph. Removing any vertices and/or arcs does not violate these properties. Therefore, any subgraph of a forest (in particular, a tree) is a forest. This means that the graph induced by an arbitrary subset of the set of vertices of a forest (tree) is also a forest.
The connected components of this forest are trees with fewer vertices if there is more than one component.  

Let's formulate a few simple statements about the trees we will use.

\begin{assertion}
Any two subtrees of a tree either do not intersect, or one of them contains the root of the other.
\end{assertion}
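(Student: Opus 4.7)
The plan is to reduce the claim to the elementary fact that in an in-forest every vertex has a unique directed outgoing path to its root, so the ancestors of a given vertex form a totally ordered chain. The key observation I would record first is the characterization: a vertex $w$ belongs to $T^F_i$ if and only if the unique directed path in $F$ starting at $w$ passes through $i$ (equivalently, $i$ is an ancestor of $w$ in the tree ordering induced by $F$, with the root of the ambient tree being maximal).

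Next, I would take two subtrees $T^F_i$ and $T^F_j$ of the same tree and assume $T^F_i\cap T^F_j\neq\emptyset$, fixing some $v$ in the intersection. From the characterization, both $i$ and $j$ lie on the unique directed path $\pi_v$ emanating from $v$. Because each vertex has at most one outgoing arc and $F$ has no contours, $\pi_v$ is a simple sequence of vertices, so $i$ and $j$ are comparable along $\pi_v$: one of them, say $j$, occurs no earlier than the other along $\pi_v$ toward the root.

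Then $j$ lies on the directed path from $i$ to the root, i.e., $j$ is an ancestor of $i$, so by the characterization $i\in T^F_j$. Since $i$ is by definition the root of $T^F_i$, this yields the conclusion: $T^F_j$ contains the root of $T^F_i$ (and in fact $T^F_i\subseteq T^F_j$, because every $w\in T^F_i$ has its outgoing path passing through $i$ and hence through $j$, although the statement only requires containment of the root).

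This is a purely combinatorial fact about in-forests; it does not invoke any of the replacement lemmas or minimality properties. The only thing to be careful about is the arc orientation convention (arcs point toward the root), which is why the ancestors of $v$ form a chain rather than a branching structure — this is really the whole content of the argument, so there is no substantial obstacle to overcome.
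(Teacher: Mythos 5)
Your argument is sound for the inclusion-maximal subtrees $T^F_i$, and its engine (each vertex has a unique outgoing path, so the ancestors of a common vertex form a chain) is exactly the engine of the paper's proof. But there is a gap in scope: the characterization you lean on, ``$w\in T^F_i$ if and only if the unique outgoing path from $w$ passes through $i$,'' is precisely the definition of the \emph{inclusion-maximal} subtree rooted at $i$, whereas the claim is stated for \emph{arbitrary} subtrees of a tree, and the paper later needs it in that generality: in the proof of Lemma 5 the claim is applied to the pair $T^Q_\alpha$ and $T_\beta$, where $T^Q_\alpha$ is a component of the auxiliary forest $Q$ and is in general \emph{not} the maximal subtree of $T$ rooted at $\alpha$. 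For a non-maximal subtree $T''$ rooted at $j$, the ``if'' direction of your characterization fails, so your final step --- ``$j$ is an ancestor of $i$, hence $i\in T''$'' --- is not justified as written.

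The repair is short and stays within your framework. Only the forward direction of the characterization is needed to place both roots $i$ and $j$ on $\pi_v$ (if a subtree rooted at $i$ contains $v$, the path from $v$ to $i$ inside that subtree is forced, arc by arc, to coincide with an initial segment of $\pi_v$, since every non-root vertex of a subtree retains its unique outgoing arc from $T$). Now suppose $j$ occurs no earlier than $i$ along $\pi_v$. The path from $v$ to $j$ \emph{inside} $T''$ is, by the same uniqueness, exactly the initial segment of $\pi_v$ ending at $j$; since $i$ lies on that segment, $i$ is a vertex of $T''$, i.e.\ $T''$ contains the root of the first subtree. This is essentially the paper's proof, which walks along the common outgoing arcs from a vertex of the intersection until first reaching the root of one of the two subtrees. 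Note also that your parenthetical strengthening $T^F_i\subseteq T^F_j$ is genuinely a fact about maximal subtrees only (it is the paper's Claim 2); for general subtrees, containing the other's root does not give containment of the whole subtree.
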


\begin{proof}
In any tree there is only one path from a selected vertex to the root of the tree. Let the tree $T$ be rooted at vertex $\delta$, and let its two subtrees $T'$ and $T''$ be rooted at vertices $\alpha$ and $\beta$. And let some vertex $\gamma$ belong to the intersection of these two subtrees. If an arc does not outgo from it in any of the subtrees, then it is the root of both of them. If an arc does not outgo from it in only one of the subtrees, then it is the root of this subtree. If the arcs originate in both trees, then this is the same arc, which in the tree $T$ itself originates from this vertex (the outgoing arc is unique for every vertex except the root). This arc belongs to both subtrees. This means that its entry also belongs to both subtrees. We move to this vertex and consider whether an arc outgoes from it. And so on until we get to the root of one of the subtrees.
\end{proof}

\begin{assertion}
Let $T$ be a tree, $T_\alpha$ and $T_\beta$ be its two inclusion-maximal connected components with roots at $\alpha$ and $\beta$, respectively. Then either ${\cal V}T_\alpha\cap {\cal V}T_\beta= \emptyset$, or one of these components is a subgraph of the other.
\end{assertion}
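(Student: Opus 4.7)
The plan is to derive this claim directly from Claim 1 together with the definition of the inclusion-maximal subtree. I would start by applying Claim 1 to the two subtrees $T_\alpha$ and $T_\beta$, which yields a dichotomy: either $\mathcal V T_\alpha \cap \mathcal V T_\beta = \emptyset$, in which case the conclusion holds trivially, or one of the subtrees contains the root of the other. So it suffices to handle the second case.

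Without loss of generality, assume $\beta \in \mathcal V T_\alpha$. I claim that $T_\beta$ is then a subgraph of $T_\alpha$. Recall that $T_\gamma$, as the inclusion-maximal subtree of $T$ rooted at $\gamma$, consists of exactly those vertices from which $\gamma$ is reachable along arcs of $T$, together with all arcs lying on such reaching paths. Since $\beta \in \mathcal V T_\alpha$, there is a directed path $\pi$ from $\beta$ to $\alpha$ in $T$. Now take any $v \in \mathcal V T_\beta$: there is a path $\pi_v$ from $v$ to $\beta$ in $T_\beta$, hence in $T$. Concatenating $\pi_v$ with $\pi$ produces a path from $v$ to $\alpha$ in $T$, so $v \in \mathcal V T_\alpha$ and every arc of $\pi_v$ belongs to $\mathcal A T_\alpha$. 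Since the arcs of $T_\beta$ are exactly the unique outgoing arcs of its non-root vertices (and thus appear on some $\pi_v$), this gives $\mathcal V T_\beta \subseteq \mathcal V T_\alpha$ and $\mathcal A T_\beta \subseteq \mathcal A T_\alpha$, i.e.\ $T_\beta$ is a subgraph of $T_\alpha$.

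There is no real obstacle here; the claim is essentially a repackaging of Claim 1 using the fact that in a tree the outgoing arc at any non-root vertex is unique, so reachability determines the subtree completely. The only point to handle carefully is the symmetric possibility $\alpha \in \mathcal V T_\beta$, which is dispatched by swapping the roles of $\alpha$ and $\beta$ in the argument above. (Note that the degenerate possibility $\alpha \in \mathcal V T_\beta$ \emph{and} $\beta \in \mathcal V T_\alpha$ simultaneously would force paths $\beta \to \alpha$ and $\alpha \to \beta$, creating a contour in $T$ unless $\alpha = \beta$, in which case $T_\alpha = T_\beta$ and the conclusion is immediate.)
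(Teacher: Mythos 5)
Your proposal is correct and follows essentially the same route as the paper: both reduce the disjointness case to Claim 1 and then, assuming one root lies in the other subtree, transfer reachability by concatenating the path to that root with the path between the roots. Your version is slightly more explicit about why the arc sets (not just the vertex sets) are nested, but the underlying argument is identical.
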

\begin{proof}
If 
$\alpha\notin{\cal V}T_\beta$ and $\beta\notin{\cal V}T_\alpha$,
 then according to Claim 1 ${\cal V}T_\alpha\cap {\cal V}T_\beta= \emptyset $. Let now, say, $\alpha\in{\cal V}T_\beta$. Obviously, if in $T$ the vertex $\alpha$ is reachable from the vertex $\gamma$, then $\beta$ is also reachable from it, and the path from $\gamma$ to $\alpha$ is the same in $ T_\alpha$, and $T_\beta$. Thus, $T_\alpha$ is a subtree of the tree $T_\beta$.
\end{proof}

\begin{assertion}
Let $T$ be a tree and 
${\cal B}\subset{\cal V}T$, $|{\cal B}|>1$. Then there is at least one vertex 
$\beta\in{\cal B}$ from which no vertex from ${\cal B}\setminus \{\beta\}$ is reachable. 
\end{assertion}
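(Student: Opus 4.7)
The plan is to reduce the statement to the structural dichotomy provided by Claim 2. For every $\beta\in{\cal B}$ I consider the inclusion-maximal subtree $T_\beta$ of $T$ rooted at $\beta$. The key observation is that, because in an entering tree each non-root vertex has a unique outgoing arc, the unique path from $\beta$ to the root of $T$ passes through $\alpha$ precisely when $\beta\in T_\alpha$, and this is in turn equivalent to the inclusion $T_\beta\subseteq T_\alpha$ (every descendant of $\beta$ is already a descendant of $\alpha$). Hence the condition ``$\alpha$ is reachable from $\beta$'' translates into ``$T_\beta\subseteq T_\alpha$''.

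Since ${\cal B}$ is finite, the family $\{T_\beta:\beta\in{\cal B}\}$ contains an inclusion-maximal element $T_{\beta^*}$, and I would propose $\beta=\beta^*$ as the sought vertex. To verify this, I fix an arbitrary $\alpha\in{\cal B}\setminus\{\beta^*\}$ and apply Claim 2 to the pair $T_\alpha$ and $T_{\beta^*}$: either their vertex sets are disjoint, or one of them is a subgraph of the other. In the disjoint case we immediately get $T_{\beta^*}\not\subseteq T_\alpha$. In the nested case the maximality of $T_{\beta^*}$ forces $T_\alpha\subsetneq T_{\beta^*}$, so $\alpha$ is a proper descendant of $\beta^*$ in $T$ and again $T_{\beta^*}\not\subseteq T_\alpha$. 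By the translation above, $\alpha$ is not reachable from $\beta^*$ in either case, which is exactly what is needed.

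The only delicate point I anticipate is the equivalence between reachability of $\alpha$ from $\beta$ and subtree inclusion $T_\beta\subseteq T_\alpha$; this rests on the uniqueness of the outgoing arc, hence of the upward path, at every non-root vertex of an entering tree. Once this translation is secured, Claim 2 carries all the combinatorial weight, so there is no real obstacle. An alternative, equally short route would be to take $\beta^*\in{\cal B}$ at minimal distance from the root of $T$: any $\alpha\in{\cal B}$ reachable from $\beta^*$ would be a strict ancestor of $\beta^*$, hence strictly closer to the root, contradicting minimality.
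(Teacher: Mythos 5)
Your proof is correct, but it follows a different route from the paper's. The paper argues directly on a path: assuming the root $\delta$ is not already in ${\cal B}$ (in which case the claim is trivial), it takes an arbitrary $\gamma\in{\cal B}$, follows the unique path from $\gamma$ to $\delta$, and lets $\beta$ be the last vertex of ${\cal B}$ encountered on that path; since the set of vertices reachable from $\beta$ is exactly the remaining segment of that path, which avoids ${\cal B}$, the conclusion is immediate. You instead translate reachability into the inclusion order on maximal subtrees ($\alpha$ reachable from $\beta$ iff $T_\beta\subseteq T_\alpha$), invoke Claim 2 for the disjoint-or-nested dichotomy, and pick an inclusion-maximal $T_{\beta^*}$; the only point you pass over quickly is that $T_{\beta^*}=T_\alpha$ with $\alpha\neq\beta^*$ is impossible because a maximal subtree determines its root, but that is immediate. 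The paper's argument is more self-contained and constructive (it exhibits $\beta$ by a single walk, without appealing to Claim 2), while yours buys a cleaner structural picture --- the reachability relation on ${\cal B}$ is the restriction of a containment order on a laminar family, so an extremal element must exist --- and your alternative suggestion (minimize distance to the root over ${\cal B}$) is a third valid variant of the same extremal idea. All three are sound; the paper's is the most economical in its reliance on prior claims.
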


\begin{proof}
If the root of the tree $T$ (let it be a vertex $\delta$ for definiteness) is contained in ${\cal B}$, then the situation is trivial. From the root no other vertex is reachable. If $\delta\notin{\cal B}$, take an arbitrary vertex $\gamma\in{\cal B}$. Consider a path from $\gamma$ to $\delta$. It can leave the set ${\cal B}$ and enter it. Let $\beta$ be the last vertex of this path such that $\beta\in{\cal B}$, and  the arc originating from it not belong to ${\cal B}$. Then $\beta$ is the desired vertex, since it is impossible to get from it to ${\cal B}\setminus \{ \beta\}$. 
\end{proof}

\begin{assertion}
Let $T=T_\delta$ be a tree, ${\cal D}\subset{\cal V}T$, $\beta$ is a vertex from which the set ${\cal D}$ is unreachable. Then the path from $\beta$ to $\delta$ does not depend on arcs outgoing from the vertices of the set ${\cal D}$.
\end{assertion}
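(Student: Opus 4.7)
The plan is to show directly that the unique path from $\beta$ to $\delta$ in $T$ avoids every vertex of ${\cal D}$, and therefore uses no arc outgoing from ${\cal D}$. Since every vertex is reachable from itself, the assumption that ${\cal D}$ is unreachable from $\beta$ forces in particular $\beta\notin{\cal D}$, which gives a clean starting point.

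First I would write the unique path from $\beta$ to $\delta$ in $T$ as a sequence of vertices $\beta=v_0,v_1,\ldots,v_r=\delta$ connected by arcs $(v_i,v_{i+1})$, $i=0,\ldots,r-1$. By the definition of reachability each $v_i$ is reachable from $\beta$ along this path. If some $v_i$ belonged to ${\cal D}$, then a vertex of ${\cal D}$ would be reachable from $\beta$, contradicting the hypothesis. Hence every vertex of the path lies in ${\cal V}T\setminus{\cal D}$.

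Next I would observe that every arc of the path has its tail among the $v_i$, i.e.\ in ${\cal V}T\setminus{\cal D}$, so no arc of the path outgoes from a vertex of ${\cal D}$. Consequently the path is determined entirely by arcs whose origins lie outside ${\cal D}$; changing or deleting arcs outgoing from vertices of ${\cal D}$ cannot alter any arc on the path, nor can it create a shorter path from $\beta$ to $\delta$ in the tree, since the path in a tree from a given vertex to the root is unique.

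The argument is essentially a one-line consequence of reflexivity of reachability plus the uniqueness of the $\beta$-to-$\delta$ path in a tree; there is no real obstacle. The only mild subtlety is making sure to invoke reflexivity so as to exclude $\beta\in{\cal D}$ and to argue that every vertex on the path, not merely the interior ones, lies outside ${\cal D}$, which is what guarantees that no tail of a path arc falls in ${\cal D}$.
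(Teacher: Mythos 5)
Your argument is correct and matches the paper's intent: the paper dismisses this claim as obvious (noting only, in a following remark, that reflexivity of reachability forces $\beta\notin{\cal D}$), and your write-up simply fills in the details of that same observation — every vertex on the $\beta$-to-$\delta$ path is reachable from $\beta$, hence lies outside ${\cal D}$, so no arc of the path outgoes from ${\cal D}$. No gap; this is the paper's proof made explicit.
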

\begin{proof}
Obviously. We only note that even if  arcs outgoing from ${\cal D}$ are changed so that the resulting graph ceases to be a tree, this will not in any way affect the path from $\beta$ to $\delta$. 
\end{proof}
\begin{remark}
The formulation of the last statement assumes that the vertex $\beta$ itself is not contained in ${\cal D}$, since any vertex is reachable from itself. 
\end{remark}

\subsection{Tree partition of a set of tree (forest) vertices}

Let $T=T_\delta$ be a tree rooted at $\delta$, and let some special partition ${\mathfrak N}^T$ of its vertex set ${\cal V}T$ be given such that if $ {\cal X}\in {\mathfrak N}^T $, then 
$T|_{\cal X}$ is a tree. Let us call such a partition a tree partition of the set of tree vertices.

Any tree partition can be easily constructed by selecting a certain subset ${\cal A}$ of the set of vertices, including the root $\delta$. Let us remove from the tree $T$ arcs outgoing from all vertices of this set except vertex $\delta$ (no arc outgoes from the root). In this section, we denote the resulting auxiliary forest as $Q$.  The forest $Q$ consists of $|{\cal A}|$ trees $T^Q_\alpha$ with roots $\alpha\in{\cal A}$ (${\cal A}={\cal K}_Q $). The sets of vertices ${\cal V}T^Q_\alpha$ constitute the required partition (see Fig. \ref{treelike}).  
The partition itself is generated by the set ${\cal A}$. Let us denote it as $\mathfrak{N}_{\cal A}^T=\{ {\cal V}T^Q_\alpha|\alpha\in{\cal A}\}$. 

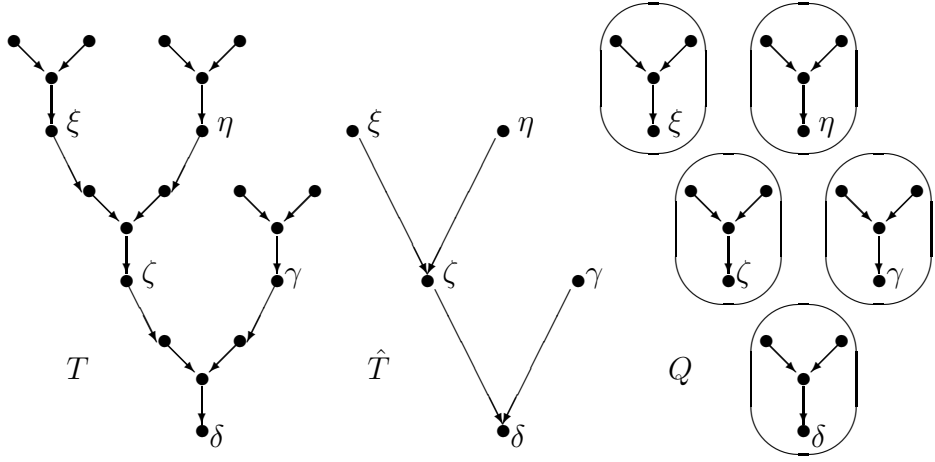
\begin{figure}[h]
\unitlength=1mm
\begin{center}
\begin{picture}(130,59)

\put(28,3){$\bullet$}
\put(28,10){$\bullet$}
\put(23,15){$\bullet$}
\put(33,15){$\bullet$}

\put(28,43){$\bullet$}
\put(28,50){$\bullet$}
\put(23,55){$\bullet$}
\put(33,55){$\bullet$}

\put(18,23){$\bullet$}
\put(18,30){$\bullet$}
\put(13,35){$\bullet$}
\put(23,35){$\bullet$}

\put(8,43){$\bullet$}
\put(8,50){$\bullet$}
\put(3,55){$\bullet$}
\put(13,55){$\bullet$}

\put(38,23){$\bullet$}
\put(38,30){$\bullet$}
\put(33,35){$\bullet$}
\put(43,35){$\bullet$}

\put(29,10){\vector(0,-1){5}}
\put(24,16){\vector(1,-1){4}}
\put(34,16){\vector(-1,-1){4}}

\put(29,50){\vector(0,-1){5}}
\put(24,56){\vector(1,-1){4}}
\put(34,56){\vector(-1,-1){4}}

\put(19,30){\vector(0,-1){5}}
\put(14,36){\vector(1,-1){4}}
\put(24,36){\vector(-1,-1){4}}

\put(39,30){\vector(0,-1){5}}
\put(34,36){\vector(1,-1){4}}
\put(44,36){\vector(-1,-1){4}}

\put(9,50){\vector(0,-1){5}}
\put(4,56){\vector(1,-1){4}}
\put(14,56){\vector(-1,-1){4}}

\put(29,44){\vector(-1,-2){4}}
\put(9,44){\vector(1,-2){4}}
\put(39,24){\vector(-1,-2){4}}
\put(19,24){\vector(1,-2){4}}

\put(108,3){$\bullet$}
\put(108,10){$\bullet$}
\put(103,15){$\bullet$}
\put(113,15){$\bullet$}

\put(108,43){$\bullet$}
\put(108,50){$\bullet$}
\put(103,55){$\bullet$}
\put(113,55){$\bullet$}

\put(98,23){$\bullet$}
\put(98,30){$\bullet$}
\put(93,35){$\bullet$}
\put(103,35){$\bullet$}

\put(88,43){$\bullet$}
\put(88,50){$\bullet$}
\put(83,55){$\bullet$}
\put(93,55){$\bullet$}

\put(118,23){$\bullet$}
\put(118,30){$\bullet$}
\put(113,35){$\bullet$}
\put(123,35){$\bullet$}

\put(109,10){\vector(0,-1){5}}
\put(104,16){\vector(1,-1){4}}
\put(114,16){\vector(-1,-1){4}}
\put(109,11){\oval(14,20)}

\put(109,50){\vector(0,-1){5}}
\put(104,56){\vector(1,-1){4}}
\put(114,56){\vector(-1,-1){4}}
\put(109,51){\oval(14,20)}

\put(99,30){\vector(0,-1){5}}
\put(94,36){\vector(1,-1){4}}
\put(104,36){\vector(-1,-1){4}}
\put(99,31){\oval(14,20)}

\put(119,30){\vector(0,-1){5}}
\put(114,36){\vector(1,-1){4}}
\put(124,36){\vector(-1,-1){4}}
\put(119,31){\oval(14,20)}

\put(89,50){\vector(0,-1){5}}
\put(84,56){\vector(1,-1){4}}
\put(94,56){\vector(-1,-1){4}}
\put(89,51){\oval(14,20)}

\put(68,3){$\bullet$}
\put(68,43){$\bullet$}
\put(48,43){$\bullet$}
\put(58,23){$\bullet$}
\put(78,23){$\bullet$}

\put(68,43){\vector(-1,-2){9}}
\put(50,43){\vector(1,-2){9}}
\put(78,23){\vector(-1,-2){9}}
\put(60,23){\vector(1,-2){9}}

\put(30,2){$\delta$}
\put(70,2){$\delta$}
\put(110,2){$\delta$} 
\put(21,24){$\zeta$}
\put(61,24){$\zeta$}
\put(100,24){$\zeta$}
\put(40,24){$\gamma$}
\put(80,24){$\gamma$}
\put(120,24){$\gamma$}
\put(11,44){$\xi$}
\put(51,44){$\xi$}
\put(91,44){$\xi$}
\put(31,44){$\eta$}
\put(71,44){$\eta$}
\put(111,44){$\eta$}

\put(11,11){$T$}
\put(51,11){$\hat T$}
\put(91,11){$Q$}

\end{picture} 
\caption{\small Constructing from a tree $T=T_\delta$ a tree $\hat T$ and a forest $Q$, the sets of vertices of which form a tree partition. It is determined by a given set of roots ${\cal A}={\cal K}_Q=\{ \delta,\gamma,\zeta,\eta,\xi\}$.}
\label{treelike}
\end{center}
\end{figure}

Along with $T^Q_\alpha$ trees, we will also consider $T_\beta$ trees ($T_\beta$ is the inclusion-maximal subtree of the tree $T$ rooted at the vertex $\beta$).

\begin{lemma}
Let $\mathfrak{N}_{\cal A}^T $ be a tree partition of the set of vertices of the tree $T$, and $\beta\in{\cal A}$.  Then for any ${\cal X}\in \mathfrak{N}_{\cal A}^T $ either ${\cal X}\cap{ \cal V}T_\beta=\emptyset$, or ${\cal X}\cap{\cal V}T_\beta={\cal X}$.
\end{lemma}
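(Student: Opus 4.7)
The plan is to reduce the statement to Claim 1 applied inside $T$. First I would parametrize: each ${\cal X}\in\mathfrak{N}_{\cal A}^T$ has the form ${\cal V}T^Q_\alpha$ for a unique $\alpha\in{\cal A}$, and since $T^Q_\alpha$ is a connected subgraph of $Q\subseteq T$, it is itself a subtree of $T$ rooted at $\alpha$. Both $T^Q_\alpha$ and $T_\beta$ are therefore subtrees of $T$ (with roots $\alpha$ and $\beta$ respectively), so Claim 1 applies: their vertex sets are either disjoint or one of the two subtrees contains the root of the other.

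If they are disjoint, the first alternative of the lemma follows immediately. Otherwise I would split into two subcases. If $\alpha\in{\cal V}T_\beta$, the $T$-path from $\alpha$ to the root $\delta$ passes through $\beta$; every $v\in{\cal V}T^Q_\alpha$ already reaches $\alpha$ inside $T$ and then continues to $\beta$, so ${\cal X}\subseteq{\cal V}T_\beta$. If instead $\beta\in{\cal V}T^Q_\alpha$, I would first dispose of the trivial case $\beta=\delta$ (where $T_\beta=T$ and the conclusion is immediate). For $\beta\in{\cal A}\setminus\{\delta\}$, the construction of $Q$ deletes the unique arc outgoing from $\beta$ in $T$, so $\beta$ is itself a root of $Q$; since any vertex belongs to exactly one component of $Q$, this forces $\alpha=\beta$, and then $T^Q_\beta$ sits inside $T_\beta$ as a subtree of $T$ rooted at $\beta$.

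I do not anticipate any genuinely hard step. The only care is bookkeeping around the root $\delta$: it is the distinguished element of ${\cal A}$ whose outgoing arc is not deleted in forming $Q$ (vacuously, as $\delta$ has no outgoing arc in $T$), which is why the possibility $\beta=\delta$ must be handled separately before invoking ``$\beta$ is a root of $Q$'' in the second branch. Once this is arranged, Claim 1 reduces the lemma to the two case distinctions above, each of which is immediate from the definitions of $T^Q_\alpha$ and $T_\beta$.
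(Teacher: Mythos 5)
Your proposal is correct and follows essentially the same route as the paper: both rest on Claim 1 applied to the subtrees $T^Q_\alpha$ and $T_\beta$ of $T$, together with the case analysis on whether $\alpha\in{\cal V}T_\beta$ or $\beta\in{\cal V}T^Q_\alpha$, and both use the hypothesis $\beta\in{\cal A}$ exactly where it matters, namely to make $\beta$ a root of $Q$ so that the second alternative collapses (you conclude $\alpha=\beta$; the paper equivalently notes $T^Q_\alpha$ and $T^Q_\beta$ are distinct components, hence $\beta\notin{\cal V}T^Q_\alpha$). Your direct reachability argument in the branch $\alpha\in{\cal V}T_\beta$ merely bypasses the paper's appeal to Claim 2, and your extra care with $\beta=\delta$ is harmless but unnecessary, since $\delta$ is also a root of $Q$.
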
 
\begin{proof}

The requirement $\beta\in{\cal A}$ is essential. Indeed, let $\beta\notin{\cal A}$. Since $Q$ is a forest obtained from $T$ by removing all arcs coming from the vertices of the set ${\cal A}$, then in this case it contains an arc coming from $\beta$. For definiteness, let this arc be $(\beta,\gamma)$. It is contained in some component of the graph $Q$ and, therefore, the set of vertices ${\cal X}$ of this component contains the vertex $\beta$ (as well as the vertex $\gamma$).
But the arc $(\beta,\gamma)$ does not belong to the tree $T_{\beta}$, which means that the vertex $\gamma$ does not belong to it, but the vertex $\beta$ does. Thus, ${\cal X}\in \mathfrak{N}_{\cal A}^T $, while $ \beta\in {\cal X}\cap{\cal V}T_\beta\neq \emptyset $, however $\gamma \notin {\cal X}\cap{\cal V}T_\beta \subsetneq {\cal X}\ni \gamma$. 

If $\beta\in{\cal A}$, then no arc in the forest $Q$ comes from it and the required result is guaranteed. Let's consider this situation in more detail.
   
For an arbitrary ${\cal X}\in{\mathfrak{N}_{\cal A}^T}$, $Q|_{\cal X}=T|_{\cal X}$ and ${\cal X}={\cal V}T^Q_\alpha$ for some $\alpha\in{\cal K}_Q$.
If  
 $\alpha\in {\cal V}T_\beta$, then by Claims 1 and 2 $T_\alpha$ is a subtree of the tree $T_\beta$. Then $T^Q_\alpha$ is a subtree of the tree $T_\beta$. That is, ${\cal X}\cap{\cal V}T_\beta={\cal X}$.
Now, let $\alpha\notin {\cal V}T_\beta$. According to Claim 2, the tree $T_\alpha$ can, of course, contain the tree $T_\beta$ as a subtree (if $\beta\in{\cal V}T_\alpha$), but this does not change anything for the tree $T ^Q_\alpha$.  The trees $T^Q_\alpha$ and $T^Q_\beta$ do not intersect, since they are different components of the forest $Q$. This means $\beta\notin {\cal V}T^Q_\alpha$. Then, by Claim 1, ${\cal V}T^Q_\alpha\cap{\cal V}T_\beta=\emptyset$.
\end{proof}

We will further index the elements of the partition by the roots of the components of the auxiliary forest $Q$: $\mathfrak{N}_{\cal A}^T=\{  {\cal X}_\alpha|\alpha\in{\cal A}\}$, ${\cal X}_\alpha={\cal V}T^Q_\alpha$. 
 
Let us associate the tree $T$ with the graph $\hat T$ according to the rule. Its vertex set is the vertices $\alpha\in{\cal A}$ (${\cal V}\hat T={\cal A} $). A pair $(\zeta,\eta)$ is an arc of a graph $\hat T$ if the graph $T$ contains an arc with an outcome in 
${\cal X}_\zeta$ and entry in ${\cal X}_\eta$ (see Fig. \ref{treelike}). The arc corresponding to it was removed when constructing the partition $\mathfrak{N}_{\cal A}^T$. In the original tree $T$ it came from the vertex $\zeta$ and entered the set ${\cal X}_\eta$. Obviously, the graph $\hat T$ is a tree with the vertex set ${\cal A}$. The set ${\cal A}$ itself determines the tree $\hat T$ in a unique way from the original tree $T$. 
 
\begin{lemma} 
Reachability (unreachability) in tree $T$ of a set ${\cal X}_\alpha$ from ${\cal X}_\beta$ is equivalent to reachability (unreachability) of $\alpha$ from $\beta$ in tree $\hat T$.
\end{lemma}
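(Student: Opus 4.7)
The plan is to show that the unique directed $T$-path starting at any vertex of $\mathcal{X}_\beta$ traces, block by block, precisely the unique $\hat T$-path from $\beta$ to the root $\delta$ of $\hat T$; the lemma then falls out as a direct translation.

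First I would pin down the local structure. Because $\beta\in\mathcal{A}$, the restriction $T|_{\mathcal{X}_\beta}$ coincides with the component $T^Q_\beta$ and is a tree rooted at $\beta$: no arc inside $\mathcal{X}_\beta$ was cut when forming $Q$, since the only arcs removed are those leaving the vertices of $\mathcal{A}\setminus\{\delta\}$, and such an arc, by the definition of the components of $Q$, leaves its block $\mathcal{X}_\beta$ entirely. Consequently, from any $u\in\mathcal{X}_\beta$ the unique $T$-path stays inside $\mathcal{X}_\beta$ and terminates at $\beta$. Either $\beta=\delta$ (in which case the $T$-path ends here, matching the degenerate $\hat T$-path $\{\delta\}$), or the single $T$-arc out of $\beta$ enters a unique block $\mathcal{X}_{\zeta_1}$; by the very definition of $\hat T$, the pair $(\beta,\zeta_1)$ is an arc of $\hat T$.

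Next I would iterate: upon entry into $\mathcal{X}_{\zeta_1}$ at some vertex $w_1$, the same argument applied to $\zeta_1\in\mathcal{A}$ shows that the path proceeds through $T|_{\mathcal{X}_{\zeta_1}}=T^Q_{\zeta_1}$ to its root $\zeta_1$ and then exits, either terminating (if $\zeta_1=\delta$) or entering a unique $\mathcal{X}_{\zeta_2}$ with $(\zeta_1,\zeta_2)\in\mathcal{A}\hat T$. Chaining these steps, the sequence of blocks visited by the $T$-path from $u\in\mathcal{X}_\beta$ is exactly $\mathcal{X}_{\alpha_0},\mathcal{X}_{\alpha_1},\ldots,\mathcal{X}_{\alpha_r}$, where $\beta=\alpha_0,\alpha_1,\ldots,\alpha_r=\delta$ is the unique $\hat T$-path from $\beta$ to the root of $\hat T$. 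This is the whole content of the lemma.

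Both implications are then immediate. If $\mathcal{X}_\alpha$ is reachable from $\mathcal{X}_\beta$ in $T$, pick $u\in\mathcal{X}_\beta$ and $v\in\mathcal{X}_\alpha$ with a $T$-path from $u$ to $v$; since the blocks meeting that path are among $\mathcal{X}_{\alpha_0},\ldots,\mathcal{X}_{\alpha_r}$, we get $\alpha=\alpha_i$ for some $i$, so $\alpha$ is reachable from $\beta$ in $\hat T$. Conversely, if $\alpha$ is reachable from $\beta$ in $\hat T$ along $\beta=\alpha_0,\ldots,\alpha_m=\alpha$, the entry vertex of the $T$-path into $\mathcal{X}_{\alpha_m}$ witnesses reachability of $\mathcal{X}_\alpha$ from $\beta\in\mathcal{X}_\beta$ in $T$. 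The unreachability claim is the contrapositive.

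There is no substantive obstacle here; the only bookkeeping point requiring care is that, at each crossing, the cut arc out of a root $\zeta\in\mathcal{A}\setminus\{\delta\}$ lands in exactly one block $\mathcal{X}_{\zeta'}$ (automatic from $\{\mathcal{X}_\alpha\}_{\alpha\in\mathcal{A}}$ being a partition of $\mathcal{V}T$) and that this single block is precisely the one recorded by the arc $(\zeta,\zeta')$ of $\hat T$. The argument is essentially a dictionary between two encodings of the same tree structure, and it must simply be phrased uniformly so that the degenerate case $\beta=\delta$ is subsumed.
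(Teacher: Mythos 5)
Your proof is correct and takes essentially the same route as the paper, which simply declares the equivalence obvious from the construction because an arc $(\zeta,\eta)$ of $\hat T$ exists if and only if $T$ has an arc leaving ${\cal X}_\zeta$ (necessarily from $\zeta$ itself) and entering ${\cal X}_\eta$. Your block-by-block tracing of the unique path to the root is just a careful unpacking of that same observation.
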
 

\begin{proof}
Obviously from the construction of the tree $\hat T$.  
It contains an arc $(\zeta,\eta)$ if and only if the forest $T$ has an arc with an exit in ${\cal X}_\zeta$ (namely outcome is $\zeta$) and an entry in ${\cal X}_\eta$.
\end{proof}

\begin{lemma}
Let $\mathfrak{N}_{\cal A}^T $ be a tree partition of the set of vertices of the tree $T$, $\mathfrak{M}\subset \mathfrak{N}_{\cal A}^T $, $|\mathfrak{M}|>1 $. Then there is at least one ${\cal X}\in\mathfrak{M}$ from which none of the other elements of $\mathfrak{M}$ in the tree $T$ is    reachable.  
\end{lemma}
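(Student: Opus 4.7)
The plan is to transfer the statement from the tree $T$ to the auxiliary tree $\hat T$ introduced in the previous subsection and then to apply Claim 3. The index set of the partition $\mathfrak{N}_{\cal A}^T$ coincides with the vertex set of $\hat T$, so a subfamily $\mathfrak{M}\subset \mathfrak{N}_{\cal A}^T$ corresponds bijectively to a subset of ${\cal V}\hat T={\cal A}$. This is the key observation that lets us invoke a statement about vertex subsets of a tree (namely Claim 3) to obtain a statement about subfamilies of a tree partition.

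Concretely, I would put ${\cal B}=\{\alpha\in{\cal A}:{\cal X}_\alpha\in\mathfrak{M}\}$. Since $\alpha\mapsto{\cal X}_\alpha$ is a bijection between ${\cal A}$ and $\mathfrak{N}_{\cal A}^T$, the hypothesis $|\mathfrak{M}|>1$ yields $|{\cal B}|>1$, and ${\cal B}\subset{\cal V}\hat T$. Applying Claim 3 to the tree $\hat T$ and the subset ${\cal B}$ produces a vertex $\beta\in{\cal B}$ from which no vertex of ${\cal B}\setminus\{\beta\}$ is reachable in $\hat T$.

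It then remains to translate this back to $T$ via Lemma 7, which states that reachability of ${\cal X}_\alpha$ from ${\cal X}_\beta$ in $T$ is equivalent to reachability of $\alpha$ from $\beta$ in $\hat T$. Consequently, no ${\cal X}_\alpha\in\mathfrak{M}\setminus\{{\cal X}_\beta\}$ is reachable from ${\cal X}_\beta$ in $T$, and ${\cal X}={\cal X}_\beta$ is the required element of $\mathfrak{M}$. There is essentially no obstacle here: once the partition is encoded by the vertex set of $\hat T$, the lemma becomes a restatement of Claim 3 on a different tree. The only minor point worth double-checking is the self-reachability convention, but this is already built into the exclusion ${\cal B}\setminus\{\beta\}$ in Claim 3 and into the bijectivity of $\alpha\mapsto {\cal X}_\alpha$.
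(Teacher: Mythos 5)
Your proof is correct and follows essentially the same route as the paper: encode $\mathfrak{M}$ by the index set ${\cal B}\subset{\cal A}={\cal V}\hat T$, apply Claim 3 to $\hat T$, and transfer the resulting vertex $\beta$ back to ${\cal X}_\beta$ via the reachability-equivalence lemma. The only slip is a label: the equivalence of reachability in $T$ and $\hat T$ is Lemma 6 in the paper, not Lemma 7 (which is the statement being proved).
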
 

\begin{proof}
The family $\mathfrak{M}$ can be represented as $\mathfrak{M}=\{ {\cal X}_\alpha|\alpha\in{\cal B}\}$, where ${\cal B} \subset{\cal A}$.  
The set ${\cal A}$ is the set of vertices of the tree $\hat{T}$. By Claim 3, there is a vertex $\beta\in{\cal B}$ such that from it in the tree $\hat{T}$ not a single remaining vertex from ${\cal B}$ is reachable. By Lemma 6, for the tree $T$ itself, this means that from ${\cal X}_\beta$ none of the sets from $\mathfrak{M}\setminus\{{\cal X}_\beta\}$ is reachable.   

\end{proof}
 
\begin{remark} All statements of this paragraph remain valid if in the formulations the original tree $T$ is replaced by forest $F$, with the proviso that when constructing a tree partition $\mathfrak{N}_{\cal A}^F$ 
(forest $F$ is matched by the same rule to forest $\hat{F}$)  
the set ${\cal A}$ must include all the roots of the forest $F$.  
\end{remark}

\subsection{Forest required}

Let (\ref{nonequal}) hold, $F\in\tilde{\cal F}^k$, $\delta\in{\cal K}_F$, and $T^F_\delta$ is  the  tree  of $F$ rooted at vertex $\delta$.
For each atom ${\cal W}$ from the partition $\aleph_k$
the graph $F|_{\cal W}$ is obviously a forest (that it is a tree remains to be seen).The atom ${\cal W}$ itself is in turn divided into sets of vertices ${\cal X}_\alpha$ of connected components of the graph $F|_{\cal W}$ with roots at the vertices $\alpha$. The set ${\cal A}$ of all such roots $\alpha$ over all ${\cal W}\in\aleph_k$ obviously includes all the roots of the forest $F$ itself. Thus ${\cal A}$ generates a tree partition $\mathfrak{N}_{\cal A}^F$ of the forest $F$: $\mathfrak{N}_{\cal A}^F=\{ { \cal X}_\alpha|\alpha\in{\cal A}\}$.  If we need to indicate which atom ${\cal W}$ a specific set ${\cal X}_\alpha$ belongs to, we will mark it with a superscript: ${\cal X}_\alpha^{\cal W}$.

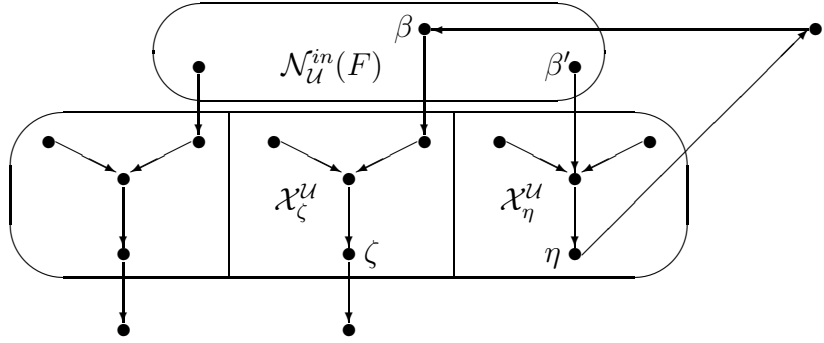
\begin{figure}[h]
\unitlength=1mm
\begin{center}
\begin{picture}(110,44)

\put(26,35){\vector(0,-1){8}}
\put(16,20){\vector(0,-1){8}}
\put(16,10){\vector(0,-1){8}}
\put(7,26){\vector(2,-1){8}}
\put(25,26){\vector(-2,-1){8}}
\put(15,20){$\bullet$}
\put(15,10){$\bullet$}
\put(15,0){$\bullet$}
\put(5,25){$\bullet$}
\put(25,25){$\bullet$}
\put(25,35){$\bullet$}

\put(56,40){\vector(0,-1){13}}
\put(46,20){\vector(0,-1){8}}
\put(46,10){\vector(0,-1){8}}
\put(37,26){\vector(2,-1){8}}
\put(55,26){\vector(-2,-1){8}}
\put(45,20){$\bullet$}
\put(45,10){$\bullet$}
\put(45,0){$\bullet$}
\put(35,25){$\bullet$}
\put(55,25){$\bullet$}
\put(55,40){$\bullet$}

\put(76,35){\vector(0,-1){13}}
\put(76,20){\vector(0,-1){8}}
\put(77,11){\vector(1,1){30}}
\put(67,26){\vector(2,-1){8}}
\put(85,26){\vector(-2,-1){8}}
\put(107,41){\vector(-1,0){50}}

\put(75,20){$\bullet$}
\put(75,10){$\bullet$}
\put(107,40){$\bullet$}
\put(65,25){$\bullet$}
\put(85,25){$\bullet$}
\put(75,35){$\bullet$}

\put(48,10){$\zeta$}
\put(72,10){$\eta$}
\put(52,40){$\beta$}
\put(72,35){$\beta'$}
\put(36,17){${\cal X}_\zeta^{\cal U}$}
\put(66,17){${\cal X}_\eta^{\cal U}$}

\put(50,38){\oval(60,13)}
\put(46,19){\oval(90,22)}
\put(30,30){\line(0,-1){22}}
\put(60,30){\line(0,-1){22}}
\put(37,35){${\cal N}^{in}_{\cal U}(F)$}

\end{picture} 
\caption{\small Shown  arcs coming from the vertices of  atom ${\cal U}$ and entering it in forest $F$ and one of  arcs entering ${\cal N}^{in}_{\cal U}(F)$. A situation is shown (not realized in reality) when the set of vertices of one connected component of the graph $F|_{\cal U}$ is reachable from another (${\cal X}^{\cal U}_\zeta$ from ${ \cal X}^{\cal U}_\eta$) through vertices not belonging to ${\cal U}$. All vertices of the set ${\cal N}^{in}_{\cal U}(F)$ belong to different atoms.}
\label{nin}
\end{center}
\end{figure}

\begin{theorem}
Let (\ref{nonequal}) hold, and $F\in\tilde{\cal F}^k$,  ${\cal U}\in\aleph_k$, $\delta\in{\cal K}_F$, ${\cal U}\subset {\cal V}T^F_\delta$. Then there exists a set ${\cal D}\subset {\cal V}T^F_\delta$ and a forest $G\in\tilde{\cal F}^k$ such that  
 arcs coming from vertices of  $\overline{\cal D}$ in  $G$ and $F$ coincide,
${\cal U}\subset{\cal V}T^G_\delta$, ${\cal N}^{in}_{\cal U}(G)=\emptyset$.  

\end{theorem}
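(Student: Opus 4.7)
\emph{The plan.} Apply Property 9 (with ${\cal E}={\cal U}$) to obtain $H\in\tilde{\cal F}^k$ with ${\cal N}^{in}_{\cal U}(H)=\emptyset$ and with the outgoing arc of every $u\in{\cal U}$ in $H$ equal to that in $F$. Define ${\cal D}=\bigcup_{v\in{\cal N}^{in}_{\cal U}(F)}{\cal V}T^F_v$, the union of the maximal $F$-subtrees rooted at the vertices that emit arcs into ${\cal U}$. Since each such $v$ lies in ${\cal V}T^F_\delta$ (its arc ends at a vertex of ${\cal U}\subset{\cal V}T^F_\delta$), we have ${\cal D}\subset{\cal V}T^F_\delta$; and by the maximality of the subtrees $T^F_v$, any vertex with an arc into ${\cal D}$ already lies in ${\cal D}$, so ${\cal N}^{in}_{\cal D}(F)=\emptyset$. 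Set $G=F^H_{\uparrow{\cal D}}$; by Corollary 1 of Lemma 1, $G$ is a forest.

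By construction the outgoing arcs of vertices of $\overline{\cal D}$ agree in $F$ and $G$. Every arc entering ${\cal U}$ in $F$ has its source in ${\cal N}^{in}_{\cal U}(F)\subset{\cal D}$, so no such arc is present in $G|_{\overline{\cal D}}$; and arcs from ${\cal D}$ in $G$ are taken from $H$, which has none entering ${\cal U}$. Hence ${\cal N}^{in}_{\cal U}(G)=\emptyset$. Moreover, for every $u\in{\cal U}$ the outgoing arc in $G$ equals that in $F$: if $u\in\overline{\cal D}$ this is by construction, and if $u\in{\cal D}$ this follows from the $F$-to-$H$ coincidence on ${\cal U}$ guaranteed by Property 9.

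It remains to check (i) ${\cal U}\subset{\cal V}T^G_\delta$ and (ii) $G\in\tilde{\cal F}^k$. For (ii), since ${\cal D}$ need not be an element of $\mathfrak{A}_k$, Property 11 does not apply directly; instead I would use the tree partition $\mathfrak{N}^F_{\cal A}$ of Section 7.2 to decompose ${\cal D}$ into blocks, each a connected component of $F|_{\cal W}$ for some atom ${\cal W}$, and apply Theorem 3 part 2 block-by-block to conclude $\Upsilon^H_{\cal D}=\Upsilon^F_{\cal D}$, whence $\Upsilon^G=\Upsilon^F_{\overline{\cal D}}+\Upsilon^H_{\cal D}=\Upsilon^F=\phi^k$; a root count (using that $\delta\notin{\cal D}$ and that an unlabeled atom contains no root of any forest from $\tilde{\cal F}^k$) then finishes the minimality check. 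The genuinely delicate point, which I expect to be the main obstacle, is (i): when ${\cal U}$ is not contiguous in $T^F_\delta$, the $F$-path from some $u\in{\cal U}$ to $\delta$ may dip into ${\cal D}$ and be rerouted by $H$'s arcs, so one must trace the rerouted branches, using Lemmas 5--7 applied to the contracted tree $\hat F$ of Section 7.2 together with the structure of $H$ on ${\cal D}$, to verify that every rerouted branch still terminates at $\delta$. If the naive ${\cal D}$ above turns out to be insufficient for (i), the fix is to enlarge ${\cal D}$ by incorporating additional tree-partition blocks prescribed by the position of $H$'s roots inside ${\cal V}T^F_\delta$, so that the root counts of $F$ and $H$ on ${\cal D}$ match and Property 2 can be invoked as a backup for the minimality.
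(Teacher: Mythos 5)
Your construction coincides with the paper's: the same ${\cal D}=\bigcup_{\beta\in{\cal N}^{in}_{\cal U}(F)}{\cal V}T^F_\beta$, the same $H$ from Property 9, the same $G=F^H_{\uparrow{\cal D}}$, and essentially the same minimality argument (decompose ${\cal D}$ into blocks of the tree partition $\mathfrak{N}^F_{\cal A}$ — each block lies in an unlabeled atom because its root $\beta$ emits an arc leaving its atom — apply (\ref{gxfx}) blockwise to get $\Upsilon^H_{\cal D}=\Upsilon^F_{\cal D}$, and note that both $F$ and $H$ have outgoing arcs at every vertex of ${\cal D}$, so $G$ has exactly $k$ roots). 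Up to that point you are on the paper's track.

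The genuine gap is exactly the point you flag as "the main obstacle": you never actually prove ${\cal U}\subset{\cal V}T^G_\delta$, you only propose to "trace the rerouted branches" and offer a fallback of enlarging ${\cal D}$, which signals that you do not have the closing argument. Tracing every rerouted branch is the wrong level of effort: the paper's trick is that you only need to anchor a \emph{single} vertex of ${\cal U}$ in $T^G_\delta$, because $G\in\tilde{\cal F}^k$ has already been established and an atom cannot be split between two trees of a minimal forest — so the rest of ${\cal U}$ is dragged into $T^G_\delta$ for free. Concretely: among the components ${\cal X}^{\cal U}_\alpha$ of $F|_{\cal U}$, Lemma 7 provides one, say ${\cal X}^{\cal U}_\zeta$ with root $\zeta$, from which no other component of $F|_{\cal U}$ is reachable in $F$; since every vertex of ${\cal D}$ reaches some $\beta\in{\cal N}^{in}_{\cal U}(F)$ whose arc re-enters ${\cal U}$, reachability of ${\cal D}$ from $\zeta$ would force either a contour through $\zeta$ or reachability of another component ${\cal X}^{\cal U}_\alpha$ — both impossible. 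Hence ${\cal D}$ is unreachable from $\zeta$ and $\zeta\notin{\cal D}$, so by Claim 4 the $F$-path from $\zeta$ to $\delta$ survives unchanged in $G$, giving $\zeta\in{\cal V}T^G_\delta$ and therefore ${\cal U}\subset{\cal V}T^G_\delta$. No enlargement of ${\cal D}$ is needed. (A minor further remark: your claim that the outgoing arcs of all vertices of ${\cal U}$ coincide in $F$ and $G$ is not required by the statement and the paper explicitly cautions, in the remark following the theorem, that ${\cal U}\cap{\cal D}$ may be nonempty; what the theorem asserts is only the coincidence on $\overline{\cal D}$.)
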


\begin{proof}
Let us present the required set:
\begin{equation}
{\cal D}=\underset{\beta\in{\cal N}^{in}_{\cal U}(F)}{\cup}{\cal V}T^F_\beta \ . 
\label{diz}
\end{equation} 
Here $T^F_\beta$ is the inclusion-maximal subtree of  $F$ (or, what is the same, of the tree $T^F_\delta$) rooted at $\beta$.

First of all, note that according to Corollary 1 from Lemma 3, from any atom $W$ at most one arc $(\beta,\gamma)$ can come and end in ${\cal U}$. That is, all vertices from ${\cal N}^{in}_{\cal U}(F)$ belong to different atoms. We assume that an unlabeled atom ${\cal U}$ (like any other unlabeled atom ${\cal W}$) can consist of several sets ${\cal X}^{\cal U}_\beta$ of family $\mathfrak{N}_{\cal A}^F$. Therefore, we do not exclude the possibility that some set ${\cal X}^{\cal U}_\zeta$ can be reachable in $F$ from another set ${\cal X}^{\cal U }_{\eta}$ through a sequence of arcs, including the vertices of other atoms (see Fig. \ref{nin}).This means that formally (\ref{diz}) is not a disjunctive union.\footnote{By disjoint union we mean the union of a family of pairwise disjoint sets.}

For any vertices $\beta$ and $\beta'$, by Claim 2, either ${\cal V}T^F_\beta\cap{\cal V}T^F_{\beta'}=\emptyset$, or one of these sets is a subset of another. Therefore (\ref{diz}) can be represented as disjunctive. To do this, it is necessary to leave in the union only trees with roots $\beta$ from which the set ${\cal N}^{in}_{\cal U}(F)\setminus \{\beta\}$ is unreachable. According to Claim 3, the set of such vertices is not empty if $|{\cal N}^{in}_{\cal U}(F)|>1$. In the example in Fig. \ref{nin} the vertex $\beta'$ (and the set ${\cal V}T^F_{\beta'}$) does not participate in the disjoint union representation (\ref{diz}), since from $\beta' $ vertex $\beta$ is reachable.

Further, any of  sets ${\cal V}T^F_\beta$ for $\beta\in{\cal N}^{in}_{\cal U}(F)$ (and the last circumstance is important), 
is a disjoint union of some sets from the tree partition $\mathfrak{N}_{\cal A}^F$. Indeed, if $\beta\in{\cal N}^{in}_{\cal U}(F)$, then $\beta$ belongs to some unlabeled atom ${\cal W}$ and is the root of a connected component of the graph $F|_{\cal W}$. That is 
$\beta\in{\cal A}$. Then, by Lemma 5, for any ${\cal X}\in \mathfrak{N}_{\cal A}^F $ or ${\cal X}\cap{\cal V}T_\beta=\emptyset$, or ${\cal X}\cap{\cal V}T_\beta={\cal X}$. But then the set ${\cal D}$ is also a disjoint union of some set of sets from $\mathfrak{N}_{\cal A}^F$. We do not assume that if ${\cal W}\cap{\cal D}\neq \emptyset$, then the atom ${\cal W}$ falls into ${\cal D}$ completely. It is important that some of its subsets contained in $\mathfrak{N}_{\cal A}^F$ completely fall there. Now, since for any set from $\mathfrak{N}_{\cal A}^F$ by Theorem 3 (\ref{gxfx}) is satisfied, then   

\begin{equation}
\Upsilon^H_{\cal D}=\Upsilon^F_{\cal D}
\label{FGZ}
\end{equation}
for any $H\in\tilde{\cal F}^k$.

According to Property 9, there is a forest $H\in\tilde{\cal F}^k$ such that  ${\cal N}^{in}_{\cal U}(H)=\emptyset$ and it contains arcs, coming in  $H$ from the vertices of the atom ${\cal U}$, the same as in $F$ (at this point in the proof we do not even care which arcs in $H$ come from the vertices of the set ${\cal U }$). Consider the graph $G=F^H_{\uparrow{\cal D}}$.  Since ${\cal N}^{in}_{\cal D}(F)=\emptyset$, then by Lemma 1 $G$ is a forest. And since both $F$ and $H$ have arcs coming from all vertices of the set ${\cal D}$, then $G\in{\cal F}^k$. In this case, (\ref{FGZ}) and $\Upsilon^H_{\cal D}=\Upsilon^G_{\cal D}$ are satisfied. Thus

\begin{equation*}
\Upsilon^G=\Upsilon^G_{\cal D}+\Upsilon^G_{\overline{\cal D}}= \Upsilon^H_{\cal D}+\Upsilon^F_{\overline{\cal D}}=\Upsilon^F_{\cal D}+\Upsilon^F_{\overline{\cal D}}=\Upsilon^F=\varphi^k \ , 
\end{equation*}
whence we conclude that $G\in\tilde{\cal F}^k$. By construction, ${\cal N}^{in}_{\cal U}(G)=\emptyset$. Arcs coming from vertices of the set $\overline{\cal D}$ remained the same in the forest $G$, that is, the same as in $F$.
The vertex $\delta$ remains a root in the forest $G$ and $T^G_\delta$ is a connected component of this forest.

It is important to note here that the replacement of arcs concerns only a certain number of vertices of a single forest tree $F$ --- tree $T^F_\delta$. This means that the sets of vertices of other trees in the forest $F$ can only increase with such an operation. That is, for any atom ${\cal E}$, if ${\cal E}\cap{\cal V}T^F_\delta=\emptyset$, then ${\cal E}\cap{\cal V}T^G_\delta=\emptyset$.

Let us now find out where in the forest $G$ the atom ${\cal U}$ itself is located.  The set ${\cal U}$ consists of a certain number of sets ${\cal X}^{\cal U}_\alpha$ of the tree partition $\mathfrak{N}_{\cal A}^F$. By Lemma 7, among them there is at least one set (let, for definiteness, ${\cal X}^{\cal U}_\zeta$), from which it is unreachable in the forest $F$ (in the tree $T^F_\delta$ ) none of the remaining sets ${\cal X}^{\cal U}_\alpha$. It is also impossible to get into the set ${\cal X}^{\cal U}_\zeta$ by moving along arcs starting at $\zeta$ (otherwise there would be a contour including $\zeta$).  
Then the set ${\cal D}$ is also unreachable from the set ${\cal X}^{\cal U}_\zeta$ (or, what is the same, from the vertex $\zeta$) and $\zeta\notin  {\cal D}$. Then, by Claim 4, the path from $\zeta$ to the root of the tree $\delta$ does not depend on the arcs coming from the vertices of the set ${\cal D}$. And from this in turn it follows that $\zeta\in{\cal V}T^G_\delta$. Since the atom is indivisible, the entire set ${\cal U}$ is contained in the tree $T^G_\delta$. 
\end{proof}

\begin{remark}
 As a result, this theorem still does not exclude that $F|_{\cal U}$ can consist of several components. And it also allows for a situation where some set ${\cal X}^{\cal U}_\zeta$ can be reachable in $F$ from another set ${\cal X}^{\cal U}_ {\eta}$ through a sequence of arcs including vertices of atoms other than ${\cal U}$. This means, in particular, that the set ${\cal X}^{\cal U}_{\eta}$ can be contained in the set ${\cal D}$, that is, that ${\cal U}\cap{\cal D}\neq \emptyset$. Thus, formally, some of the arcs coming from the vertices of the set ${\cal U}$ may turn out to be different in  $G$ than in  $F$. But for our purposes now something else is important. Namely: if some atom ${\cal E}$ was in the forest $F$ not in the same tree with ${\cal U}$, then in the constructed forest $G$, in which the arcs in ${\cal U }$ do not enter, the atom ${\cal E}$ is still not in the same tree with ${\cal U}$.  
\end{remark}

\section{Full justification of the hypothesis} 
\subsection{Main theorem}
Now the hypothesis put forward can be formulated as a theorem. 

\begin{theorem}
Any forest $F\in\tilde{\cal F}^k$ restricted to an atom of algebra $\mathfrak{A}_k$ is a tree.
\end{theorem}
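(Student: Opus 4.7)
The plan is to assume (\ref{nonequal}) and ${\cal U}\in\aleph_k^\circ$ (the reduction to this case is already handled by Proposition 1, Theorem 1, Property 15, and the trivial case $k=1$), and argue by contradiction: if $F|_{\cal U}$ is disconnected for some $F\in\tilde{\cal F}^k$, I will produce a contradiction using Lemma 4 and Theorem 4 together with the machinery about outgoing arcs (Theorem 2 and Property 3).

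First I would observe that if $F|_{\cal U}$ has more than one connected component, then the roots of at least two distinct components have arcs outgoing from ${\cal U}$ in $F$, and these arcs must enter at least two \emph{different} atoms ${\cal E}$ and ${\cal S}$ (Corollary 1 forbids two arcs from ${\cal U}$ entering the same atom). Next, by Theorem 2, neither ${\cal E}$ nor ${\cal S}$ can be labeled (else the arc entering it would be the unique arc leaving ${\cal U}$), so ${\cal E},{\cal S}\in\aleph_k^\circ$.

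Now I would use Property 3 applied to ${\cal U}$ and ${\cal E}$ to obtain a forest $F_0\in\tilde{\cal F}^k$ in which ${\cal U}$ and ${\cal E}$ belong to different trees. Apply Theorem 4 to this $F_0$ (with the same atom ${\cal U}$): it yields a forest $G\in\tilde{\cal F}^k$ with ${\cal N}^{in}_{\cal U}(G)=\emptyset$, and by the remark following Theorem 4, the atom ${\cal E}$, which was not in the same tree as ${\cal U}$ in $F_0$, is still not in the same tree as ${\cal U}$ in $G$. Thus $G$ satisfies the hypotheses of Lemma 4 with respect to the pair $({\cal E},{\cal S})$: no arcs enter ${\cal U}$ in $G$, and at least one of ${\cal E},{\cal S}$ (namely ${\cal E}$) is separated from ${\cal U}$.

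Finally, Lemma 4 then asserts that in any forest of $\tilde{\cal F}^k$, and in particular in $F$, at most one arc outgoes from ${\cal U}$ into ${\cal E}\cup{\cal S}$. This directly contradicts our choice of $F$, which was assumed to have one arc from ${\cal U}$ to ${\cal E}$ and another from ${\cal U}$ to ${\cal S}$. Hence $F|_{\cal U}$ must be connected, i.e.\ a tree. The main obstacle has essentially already been overcome in the preparatory sections: the delicate point was establishing that the forest $G$ needed in Lemma 4 can be chosen to simultaneously satisfy ${\cal N}^{in}_{\cal U}(G)=\emptyset$ \emph{and} keep a prescribed atom separated from ${\cal U}$, which is exactly the content of Theorem 4 and its remark. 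The proof of the main theorem is then a short assembly of these two pieces.
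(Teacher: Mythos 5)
Your proof is correct and follows essentially the same route as the paper: reduce to an unlabeled atom under (\ref{nonequal}), note via Corollary 1 and Theorem 2 that two components would force arcs into two distinct unlabeled atoms ${\cal E}$ and ${\cal S}$, produce a forest separating ${\cal E}$ from ${\cal U}$, clean it with Theorem 4 and its remark, and invoke Lemma 4 for the contradiction. The only (harmless) deviation is that you obtain the separating forest directly from Property 3, whereas the paper derives it from Lemma 3 together with the three-atom Lemma 2; your shortcut is valid since Lemma 3 then automatically places ${\cal S}$ in the same tree as ${\cal U}$.
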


\begin{proof}
According to Theorem 1, it is sufficient to consider the situation when (\ref{nonequal}) is satisfied. According to Property 15, only unlabeled atoms are subject to verification.

We will carry out the proof by contradiction. Suppose there is an unlabeled atom ${\cal U}$ such that the graph $F|_{\cal U}$ is disconnected. It is enough to assume that it consists of two components. Then arcs coming from them in $F$ enter some atoms ${\cal E}$ and ${\cal S}$. By Corollary 1 from Lemma 3, atoms ${\cal E}$ and ${\cal S}$ cannot coincide. And by Lemma 3 itself, in any forest $G\in\tilde{\cal F}^k$ only one of these atoms can be in not the same tree with ${\cal U}$. Then, by Lemma 2, there must exist both a forest in which ${\cal E}$ is not in the same tree with ${\cal U}$, and a forest in which ${\cal S}$ is not in the same tree with ${ \cal U}$, as shown in Fig. \ref{hip} (by the way, according to Theorem 2, none of these atoms can be labeled). For definiteness, consider the forest $G$ (Fig. \ref{hip} on the left), in which ${\cal E}$ is not in the same tree as ${\cal U}$.  According to Theorem 4, without loss of generality we can assume that in  $G$ no arcs enter ${\cal U}$, and the atom ${\cal E}$ is still not in the same tree with ${\cal U }$. As for the atom ${\cal S}$, then by Lemma 3 there is nowhere for it to be except in the same tree as ${\cal U}$.  
Thus, we are in the conditions of Lemma 4, according to which only one arc can originate in  $F$ from ${\cal U}$ to the set ${\cal E}\cup{\cal S}$. Contradiction.
\end{proof} 

Theorem 5 allows us to strengthen Property 9, which is constantly used in proofs when constructing forests, by combining it with Theorem 4.

\begin{proposition}
Let (\ref{nonequal}) hold,  $F\in\tilde{\cal F}^k$,  ${\cal U}\in\aleph_k$ and    
\begin{equation}
{\cal D}=\underset{\beta\in{\cal N}^{in}_{\cal U}(F)}{\cup}{\cal V}T^F_\beta \ . 
\label{cup}
\end{equation}
Then  

i) ${\cal D}\cap {\cal U}=\emptyset$; 

ii) union (\ref{cup}) is disjunctive;

iii) there is a forest $G\in\tilde{\cal F}^k$ such that  
 arcs coming from  vertices of the set $\overline{\cal D}$ in  $G$ and $F$ coincide,  
and ${\cal N}^{in}_{\cal U}(G)=\emptyset$.  
\end{proposition}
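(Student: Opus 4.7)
The plan is to reduce everything to Theorem~5, together with Theorem~4. By Theorem~5 the induced subgraph $F|_{\cal U}$ is a tree; let $r$ denote its root. Every vertex of ${\cal U}\setminus\{r\}$ has its $F$-outgoing arc inside $F|_{\cal U}$, while $r$ either lies in ${\cal K}_F$ (the labeled case) or has a unique outgoing arc in $F$ and that arc is the only arc of $F$ leaving ${\cal U}$. Hence for every $\gamma\in{\cal U}$ the outgoing path in $F$ starting at $\gamma$ first ascends inside $F|_{\cal U}$ all the way to $r$ and only then, if ever, leaves ${\cal U}$.

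For part~(i) I would argue by contradiction. Suppose some $\gamma\in {\cal U}\cap{\cal V}T^F_\beta$ exists with $\beta\in{\cal N}^{in}_{\cal U}(F)$. Then $\beta$ lies on the outgoing path from $\gamma$ in $F$; by the previous paragraph this path must first reach $r$, exit ${\cal U}$, and only then reach $\beta\notin{\cal U}$ (so in particular ${\cal U}$ is unlabeled). But $\beta$'s outgoing arc returns to ${\cal U}$ at some vertex $w$, and from $w$ the path again climbs $F|_{\cal U}$ back to the same $r$, producing a directed contour through $r$ and contradicting that $F$ is a forest. For part~(ii), suppose distinct $\beta,\beta'\in{\cal N}^{in}_{\cal U}(F)$ satisfied ${\cal V}T^F_\beta\cap{\cal V}T^F_{\beta'}\neq\emptyset$. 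By Claim~2 one tree would contain the other, so after renaming $\beta\in{\cal V}T^F_{\beta'}$, i.e.\ the outgoing path from $\beta$ reaches $\beta'$. That path begins by entering ${\cal U}$ at the head $w$ of $\beta$'s outgoing arc, ascends $F|_{\cal U}$ to $r$, exits, and later reaches $\beta'$; then $\beta'$'s outgoing arc re-enters ${\cal U}$ and the path must once more climb to $r$ --- a contour, contradiction.

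Part~(iii) needs essentially nothing new. By Theorem~5 there is $\delta\in{\cal K}_F$ with ${\cal U}\subset{\cal V}T^F_\delta$; the set ${\cal D}$ in~(\ref{cup}) is exactly the set~(\ref{diz}) appearing in the proof of Theorem~4, so the forest $G$ produced there satisfies precisely the conclusion of~(iii). The main conceptual obstacle has already been paid for in proving Theorem~5; what remains for (i) and (ii) is only to phrase the contour argument so that the labeled and unlabeled cases are treated uniformly, which is why I isolate the single distinguished vertex $r$ through which every $F$-outgoing path starting in ${\cal U}$ must pass.
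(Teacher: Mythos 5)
Your proof is correct and follows essentially the same route as the paper: Theorem~5 makes $F|_{\cal U}$ a tree with a single exit vertex, parts (i) and (ii) then follow from the same unreachability/contour argument (the paper phrases it as ``${\cal N}^{in}_{\cal U}(F)$ is unreachable from the root $\zeta$, hence the subtrees $T^F_\beta$ are pairwise disjoint by Claim~2''), and part (iii) is, as you say, exactly the construction of Theorem~4 applied with the forest $H$ supplied by Property~9.
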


\begin{proof}
$i)$ If ${\cal U}\in\aleph_k^\bullet$, then no arcs come from it and automatically ${\cal D}\cap {\cal U}=\emptyset$. If ${\cal U}\in\aleph_k^\circ$, then by Theorem 5 in $F$ there is only one arc coming from ${\cal U}$, and since $F|_{\cal U}$ --- tree, then the set ${\cal N}_{\cal U}^{in}(F)$ is unreachable from ${\cal U}$ (otherwise there would be a contour in $F$). But then ${\cal D}\cap {\cal U}=\emptyset$. 

$ii)$ By Theorem 5, $F|_{\cal U}$ is a tree rooted at some vertex $\zeta$. This means that in  $F$ the vertex $\zeta$ is reachable from all vertices of the set ${\cal N}_{\cal U}^{in}(F)$ and none of the vertices ${\cal N}_{\cal U}^{in}(F)$, is reachable from $\zeta$ (otherwise there would be a contour including $\zeta$). But then any vertex $\beta\in{\cal N}_{\cal U}^{in}(F)$ is unreachable from any vertex of the set ${\cal N}_{\cal U}^{in }(F)\setminus \{\beta \}$. Then, by Claim 2, if  $\{\beta,\beta'\} \subset{\cal N}_{\cal U}^{in}(F)$, then  ${\cal V}T_\beta^F\cap {\cal V}T^F_{\beta'}=\emptyset$.

$iii)$ By Property 9, there is a forest $H\in\tilde{\cal F}^k$ such that ${\cal N}^{in}_{\cal U}(H)=\emptyset$. By Theorem 4, $G=F^H_{\uparrow{\cal D}}$ is the required forest. 
\end{proof}

The essence of Proposition 2 is as follows. Let ${\cal D}$ be the entire set of vertices from which some atom ${\cal U}$ in $F\in\tilde{\cal F}^k$ is reachable. Without violating the minimality property, this set can be “detached” \ from ${\cal U}$ and redistributed among the trees of the forest. Arcs coming from vertices of the set $\overline{\cal D}$ remain the same (while ${\cal U}\subset\overline{\cal D}$). For the set ${\cal D}$ itself, the representation (\ref{diz}) is a disjoint union. 

Note that Theorem 5 can be reformulated as follows.  

{\bf Theorem 5'.} {\it The family of atoms $\aleph_k$ is a tree partition for any $F\in\tilde{\cal F}^k$.} 

Let us clarify this formulation.  Let us define the set ${\cal A}$ for the forest $F\in\tilde{\cal F}^k$ according to the rule: $\alpha\in{\cal A}$ if there is ${\cal X}\in \aleph_k$ such that $\alpha$ is the root of the tree $F|_{\cal X}$. It is obvious that $|{\cal A}|=|\aleph_k|$.   Atoms themselves that make up the family $\aleph_k$ can be indexed by vertices from ${\cal A}$: $\aleph_k= \mathfrak{N}^F_{\cal A}=\{{\cal X}_\alpha| \alpha\in{\cal A} \} $.  Similarly, for any other forest $F'\in\tilde{\cal F}^k$ we can define the set ${\cal A}'$ and index the same atoms in a different way and write the tree partition for $F'$ through them.  At the same time, the sets     
${\cal A}$ and ${\cal A}'$ may not only not coincide, but may not even intersect. 

\begin{figure}[h]
\unitlength=1mm
\begin{center}
\begin{picture}(105,43)

\put(0,28){$V$}
\put(18,28){$3$}
\put(6,12){$3$}
\put(12,0){$2$}
\put(12,7){$2$}
\put(12,16){$1$}
\put(12,23){$1$}
\put(12,32){$2$}
\put(12,39){$2$}

\put(6,4){\vector(1,0){14}}
\put(20,6){\vector(-1,0){14}}
\put(6,20){\vector(1,0){14}}
\put(20,22){\vector(-1,0){14}}
\put(6,36){\vector(1,0){14}}
\put(20,38){\vector(-1,0){14}}
\put(5,20){\vector(0,-1){14}}
\put(21,22){\vector(0,1){14}}
\put(4,4){$\bullet$}
\put(4,20){$\bullet$}
\put(4,36){$\bullet$}
\put(20,4){$\bullet$}
\put(20,20){$\bullet$}
\put(20,36){$\bullet$}
\put(1,4){$\gamma$}
\put(23,4){$\xi$}
\put(1,20){$\beta$}
\put(23,20){$\eta$}
\put(1,35){$\alpha$}
\put(23,35){$\zeta$}

\put(38,28){$F$}
\put(51,33){${\cal L}$}
\put(51,17){${\cal U}$}
\put(51,1){${\cal M}$}

\put(45,20){\vector(0,-1){14}}
\put(60,5){\vector(-1,0){14}}
\put(60,21){\vector(-1,0){14}}
\put(60,37){\vector(-1,0){14}}

\put(44,4){$\bullet$}
\put(44,20){$\bullet$}
\put(44,36){$\bullet$}
\put(60,4){$\bullet$}
\put(60,20){$\bullet$}
\put(60,36){$\bullet$}
\put(53,5){\oval(26,10)}
\put(53,21){\oval(26,10)}
\put(53,37){\oval(26,10)}
\put(41,4){$\gamma$}
\put(63,4){$\xi$}
\put(41,20){$\beta$}
\put(63,20){$\eta$}
\put(41,35){$\alpha$}
\put(63,35){$\zeta$}

\put(77,28){$F'$}
\put(91,33){${\cal L}$}
\put(91,17){${\cal U}$}
\put(91,1){${\cal M}$}

\put(84,4){$\bullet$}
\put(84,20){$\bullet$}
\put(84,36){$\bullet$}
\put(100,4){$\bullet$}
\put(100,20){$\bullet$}
\put(100,36){$\bullet$}
\put(93,5){\oval(26,10)}
\put(93,21){\oval(26,10)}
\put(93,37){\oval(26,10)}
\put(81,4){$\gamma$}
\put(103,4){$\xi$}
\put(81,20){$\beta$}
\put(103,20){$\eta$}
\put(81,35){$\alpha$}
\put(103,35){$\zeta$}
\put(101,22){\vector(0,1){14}}
\put(86,5){\vector(1,0){14}}
\put(86,21){\vector(1,0){14}}
\put(86,37){\vector(1,0){14}}
\end{picture} 
\caption{\small For the graph $V$ (on the left) the forests $F,F'\in\tilde{\cal F}^2$. The sets ${\cal L},{\cal U},{\cal M}$ constitute both the family $\aleph_2$ and the family $\aleph_3$, with the only difference that ${\cal U}\in \aleph_2^\circ$, but ${\cal U}\in\aleph_3^\bullet$. The forest $F$ restricted to any of the atoms does not coincide with the corresponding restriction of the forest $F'$.}
\label{woody}
\end{center}
\end{figure}
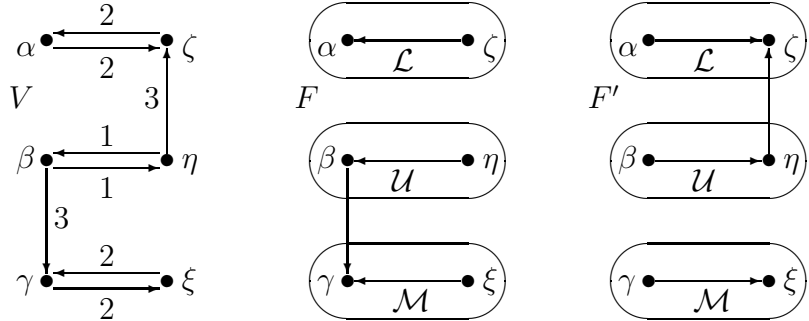

\begin{example}

For the original graph $V$ (Fig.\ref{woody}), the family $\aleph_2$ consists of three atoms: two labeled ${\cal L}$ and ${\cal M}$, and one unlabeled ${\cal U}$.  For the forest $F\in\tilde{\cal F}^2$ the set ${\cal A}$ is $\{\alpha,\beta,\gamma\}$. For a forest $F'$ the corresponding set is ${\cal A}'=\{\zeta,\eta,\xi\}$. The tree partitions themselves for $F$ and $F'$ can be written as:
$\mathfrak{N}^F_{\cal A}=\{{\cal X}_\alpha, {\cal X}_\beta, {\cal X}_\gamma\}$ and $\mathfrak{N}^{F'}_{{\cal A}'}=\{{\cal X}_\zeta, {\cal X}_\eta, {\cal X}_\xi\}$, where  $ {\cal X}_\alpha={\cal X}_\zeta={\cal L}$, $ {\cal X}_\beta={\cal X}_\eta={\cal U}$ and $ {\cal X}_\gamma={\cal X}_\xi={\cal M}$. At the same time, of course,  
$\aleph_2= \mathfrak{N}^F_{\cal A}= \mathfrak{N}^{F'}_{{\cal A}'}$. Note that in this example there is no spanning tree. Therefore $\varphi^1=\infty$ and
\begin{equation*}
\varphi^1-\varphi^2 >\varphi^2-\varphi^3 \ . 
\end{equation*}
Nevertheless, the algebras $\mathfrak{A}_2$ and $\mathfrak{A}_3$ turned out to be the same, as well as the families of atoms. But at the same time, the families of labeled and unlabeled atoms differ: $\aleph_2=$ $\aleph_3$ $=\aleph_3^\bullet$ $=\{{\cal L},{\cal U},{\cal M}\}$, but  $\aleph_2^\bullet=$ $\{{\cal L},{\cal M}\}$ and $\aleph_2^\circ=$ $\{{\cal U}\}$. 
\end{example}

\subsection{Narrowing the forest to an atom of someone else's algebra}

After proof of Theorem 5, the conditional conclusion formulated in \cite{V7} turns into a theorem. 

\begin{theorem}
The forest $F\in\tilde{\cal F}^{k-1}$ restricted to an atom of algebra $\mathfrak{A}_k$ is a tree.
\end{theorem}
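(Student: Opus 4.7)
The plan is to reduce the statement to Theorem 5 by splitting on the convexity inequality at index $k$. Fix $F \in \tilde{\cal F}^{k-1}$ and an arbitrary atom ${\cal E} \in \aleph_k$; the goal is to show that $F|_{\cal E}$ is a tree.

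If equality $\phi^{k-1}-\phi^k = \phi^k-\phi^{k+1}$ holds at $k$, then Property 5 yields $\tilde{\cal F}^{k-1}|_{\cal E} \subseteq \tilde{\cal F}^k|_{\cal E}$, so $F|_{\cal E} = H|_{\cal E}$ for some $H \in \tilde{\cal F}^k$, and applying Theorem 5 to $H$ finishes this case immediately.

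If instead (\ref{nonequal}) holds at $k$, I would apply Property 16 to $F$ to obtain a forest $P \in \tilde{\cal F}^k$ together with a labeled atom ${\cal M}^* \in \aleph_k^\bullet$ such that all arcs outgoing from the vertices of $\overline{{\cal M}^*}$ coincide in $F$ and $P$, and such that $F|_{{\cal M}^*}$ is already a tree. When ${\cal E} = {\cal M}^*$ the conclusion is immediate from Property 16. When ${\cal E} \neq {\cal M}^*$, the two atoms are disjoint, so ${\cal E} \subset \overline{{\cal M}^*}$; every arc of $F|_{\cal E}$ originates in ${\cal E}$ and therefore coincides with the corresponding arc of $P$, giving $F|_{\cal E} = P|_{\cal E}$, which is a tree by Theorem 5.

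The main difficulty has already been absorbed into Theorem 5 itself; what remains here is essentially a bookkeeping step. Property 16, for a given $F \in \tilde{\cal F}^{k-1}$, produces a single labeled atom ${\cal M}^*$ at which $F$ behaves well, together with a partner $P \in \tilde{\cal F}^k$ that agrees with $F$ everywhere outside ${\cal M}^*$. Theorem 5 then instantly transfers treeness from $P$ to $F$ at every other atom, so the apparent asymmetry between the distinguished atom ${\cal M}^*$ and the rest disappears.
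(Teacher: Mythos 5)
Your proposal is correct, and its skeleton coincides with the paper's: both split on whether equality or the strict inequality (\ref{nonequal}) holds at index $k$, both dispose of the equality case via the inclusion $\tilde{\cal F}^{k-1}|_{\cal E}\subseteq\tilde{\cal F}^k|_{\cal E}$ from Property 5 followed by Theorem 5, and both rest the strict case on Property 16. The difference is in how the strict case is finished. The paper treats labeled and unlabeled atoms separately, citing Property 16 for the former and Property 17 (the inclusion (\ref{uat})) for the latter; you instead use only the arc-coincidence clause of Property 16: for any atom ${\cal E}\neq{\cal M}^*$ you obtain $F|_{\cal E}=P|_{\cal E}$ because ${\cal E}\subset\overline{{\cal M}^*}$ and $F$, $P$ agree on all arcs outgoing from $\overline{{\cal M}^*}$, after which Theorem 5 applied to $P\in\tilde{\cal F}^k$ gives treeness. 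Since Property 17 is itself a corollary of Property 16 in \cite{V7}, you are effectively inlining that corollary, and your version is uniform over labeled and unlabeled atoms; it also makes explicit how labeled atoms other than the distinguished ${\cal M}^*$ are handled, a point that the paper's one-line appeal to Property 16 (which, as stated, only asserts treeness at the single atom ${\cal M}^*$) passes over. Both arguments depend equally on Theorem 5, so neither buys additional generality; yours is marginally more self-contained and slightly more careful at the labeled atoms.
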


\begin{proof}
Indeed, if equality (\ref{equal}) is satisfied, then from Property 5 (\ref{al})

\begin{equation*}
\tilde{\cal F}^{k-1}|_{\cal E}\subseteq \tilde{\cal F}^k|_{\cal E}
,  \ \  {\cal E}\in\aleph_k. 
\end{equation*}
If the strict inequality (\ref{nonequal}) holds, then by Property 16, if ${\cal M}$ is a labeled atom, then $F|_{\cal M}$ is a tree. For unlabeled atoms, according to Property 17 (\ref{uat})

\begin{equation*}
\tilde{\cal F}^{k-1}|_{\cal U}\subseteq \tilde{\cal F}^k|_{\cal U},  \ \  {\cal U}\in\aleph_k^\circ. 
\end{equation*}
\end{proof}

Thus, the family of atoms $\aleph_k$ turns out to be a tree partition for any forest $F\in\tilde{\cal F}^k\cup\tilde{\cal F}^{k-1}$ in the sense that $ F|_{\cal U}$ --- tree for ${\cal U}\in\aleph_k$. This greatly facilitates the construction of forests from $\tilde{\cal F}^{k-1}$ and a family of atoms $\aleph_{k-1}$ with known $\tilde{\cal F}^{k}$ and $ \aleph_{k}$. 

Theorem 6 provokes the question: maybe the restriction of  $F\in\tilde{\cal F}^{k-1}$ to an atom of algebra $\mathfrak{A}_{k+1}$ is a tree? The answer is no. The reason is as follows. When (\ref{nonequal}), if ${\cal M}\in\aleph_k^\bullet$,  although the set $\tilde{\cal F}^{k-1}|_{\cal M}$ consists of trees, but these trees may not belong to the set $\tilde{\cal F}^k|_{\cal M}$. Therefore, despite the fact that any tree from $\tilde{\cal F}^k|_{\cal M}$ restricted to an atom ${\cal L}$ of the algebra $\mathfrak{A}_{k+1 }$ is still a tree (if the atom ${\cal L}$ is a subset of the atom ${\cal M}$), then for trees of the set $\tilde{\cal F}^{k-1}| _{\cal M}$, this is, generally speaking, no longer the case. To demonstrate this effect, it is convenient to use the example from \cite{V7}.

\begin{example}
Weights of arcs of the graph $V$: $V_{ba}=1$, $V_{ac}=V_{bd}=2$, $V_{cb}=3$, (see figure\ref{ato}) . Here, each of the sets ${\cal F}^k$, $k\in\{1,2,3,4\}$ consists of a single forest: $\{F_k\}={\cal F}^{ k}=\tilde{\cal F}^{k}$. Forest $F_4$ --- an empty forest consisting of 4 vertices, not shown in the figure. We have: $\phi^4=0$, $\phi^3=1$, $\phi^2=3$, $\phi^1=7$ and by definition $\phi^0=\infty$. The convexity inequalities have the form 

\begin{equation*}
\phi^0-\phi^1>\phi^1-\phi^2>\phi^2-\phi^3>\phi^3-\phi^4 \ ,
\end{equation*}
in which the sign of strict inequality is applied everywhere.  
\begin{figure}[h]
\unitlength=1mm
\begin{center}
\begin{picture}(115,30)

\put (8,25){$V$}
\put(0,5){$\centerdot$}
\put(13,5){$\centerdot$}
\put(0,18){$\centerdot$}
\put(13,18){$\centerdot$}
\put(0,20){$a$}
\put(13,20){$b$}
\put(0,1){$c$}
\put(13,1){$d$}
\put(1,18){\vector(0,-1){12}}
\put(13,18){\vector(-1,0){12}}
\put(14,18){\vector(0,-1){12}}
\put(1,5){\vector(1,1){13}}
\put(7,19){1}
\put(2,12){2}
\put(8,9){3}
\put(15,11){2}

\put(25,25){$F_3, \aleph_3$}
\put(24,5){$\centerdot$}
\put(37,5){$\centerdot$}
\put(24,18){$\centerdot$}
\put(37,18){$\centerdot$}
\put(37,18){\vector(-1,0){12}}
\put(31,18){\oval(17,6)}
\put(25,5){\circle{5}}
\put(37,5){\circle{5}}

\put (49,25){$F_2,\aleph_2$}
\put(48,5){$\centerdot$}
\put(61,5){$\centerdot$}
\put(48,18){$\centerdot$}
\put(61,18){$\centerdot$}
\put(61,18){\vector(-1,0){12}}
\put(49,18){\vector(0,-1){12}}
\put(49,18){\oval(6,6)[tl]}
\put(46,18){\line(0,-1){12}}
\put(49,6){\oval(6,6)[bl]}
\put(49,21){\line(1,0){12}}
\put(61,18){\oval(6,6)[tr]}
\put(49,3){\line(1,1){15}}
\put(61,5){\circle{5}}

\put(73,25){$F_1,\aleph_1$}
\put(72,5){$\centerdot$}
\put(85,5){$\centerdot$}
\put(72,18){$\centerdot$}
\put(85,18){$\centerdot$}
\put(73,18){\vector(0,-1){12}}
\put(86,18){\vector(0,-1){12}}
\put(73,5){\vector(1,1){13}}
\put(79,12){\oval(20,20)}

\put (96,25){$F_1,\{a,b\}$}
\put(97,5){$\centerdot$}
\put(110,5){$\centerdot$}
\put(97,18){$\centerdot$}
\put(110,18){$\centerdot$}
\put(98,18){\vector(0,-1){12}}
\put(111,18){\vector(0,-1){12}}
\put(98,5){\vector(1,1){13}}
\put(104,18){\oval(17,6)}

\end{picture} 
\caption{\small The graph $V$ and  weights of its arcs (on the left), minimal forests $F_{3,2,1}$ and atoms of  algebras $\mathfrak{A}_{3,2,1}$. On the right, the forest $F_1$, restricted to the labeled atom $\{a,b\}$ of the subset algebra $\mathfrak{A}_3$, is not a tree.}
\label{ato}
\end{center}
\end{figure}
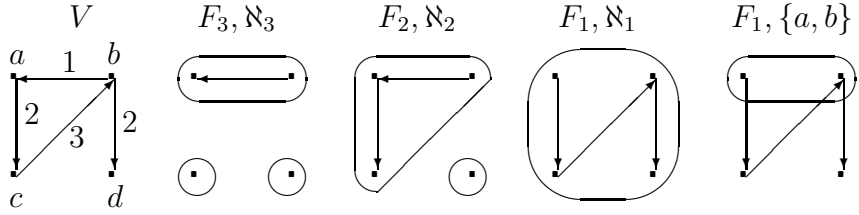

Since each of the sets $\tilde{\cal F}^k$, $k\in\{1,2,3,4\}$ consists of a single forest $F_k$, then the restriction of $F_k$ to any atom from $ \aleph_k$ is automatically a tree, since the connected components of the forest are trees. And the sets of tree vertices of $F_k$ constitute the family $\aleph_k$ in this situation.

In this example, the set $\{ a,b,c\}$ is a labeled atom of the algebra $\mathfrak{A}_2$. Consider the generated subgraph $F_1|_{\{ a,b,c\}}$. It is a tree (in full accordance with Theorem 6), which is conveniently described by two arcs: $(a,c)$ and $(c,b)$. But this tree is not contained in the set of trees ${\cal F}^2|_{\{ a,b,c\}}$, which consists of a single tree $F_2|_{\{ a,b,c\} }$ (its arcs: $(b,a)$ and $(a,c)$). This discrepancy is immediately reflected when the forest $F_1$ is narrowed to the atoms of the algebra $\mathfrak{A}_3$. The set $\{a,b\}$ is a labeled atom of this algebra and $\{a,b\} \subset \{ a,b,c\}$. Again, in full accordance with Theorem 6, $F_2|_{\{a,b\}}$ is a tree. It consists of a single arc $(b,a)$. But the induced subgraph $F_1|_{\{a,b\}}$ is not a tree. It is an empty forest consisting of two empty trees with roots at vertices $a$ and $b$.

\end{example}
 
\subsection{Unweighted digraphs}

In the absence of a weight function, we can assume that  weights of all arcs are equal to one. Every spanning forest is also a minimal forest. If ${\cal F}^k\neq \emptyset$, then for $F\in{\cal F}^k$ $\Upsilon^F=|{\cal A}F|=N-k=\varphi^ k$.  If ${\cal F}^{k-1} = \emptyset$, then by definition $\varphi^{k-1}=\infty$, and the convexity inequalities take the form

\begin{equation*}
\phi^{k-1}-\phi^k > \phi^k- \phi^{k+1} = \phi^{k+1}-\phi^{k+2} = \ldots = \phi^{N-1}-\phi^N=1 \ ,  
\end{equation*}
whence from Property 5 we conclude that $\mathfrak{A}_{k+1}=\mathfrak{A}_{k+2}=\ldots = \mathfrak{A}_N$. Thus, all these algebras coincide with a Boolean whose atoms are single-element. The subgraph of any graph without loops, induced by a singleton subset, is the empty tree.   If the original digraph has at least one spanning tree ($k=1$), then the situation looks quite simple. The only non-Boolean algebra is the algebra $\mathfrak{A}_1$, and it is trivial $\mathfrak{A}_1=\{ {\cal N},\emptyset\}$. But, if ${\cal V}G={\cal N}$, then $G|_{\cal N}=G$. In particular, if $T\in{\cal F}^1$ (the forest $T$ is a spanning tree), then $T|_{\cal N}=T$ is still a tree. Thus, the general formulation of Theorem 5 can be narrowed to a single meaningful situation (which no longer needs proof).

\begin{theorem}
Let ${\cal F}^k\neq \emptyset$ and ${\cal F}^{k-1} = \emptyset$ for some $k>1$, and ${\cal E}\in\aleph_k$. Then $F|_{\cal E}$ is a tree for any $F\in{\cal F}^k$.
\end{theorem}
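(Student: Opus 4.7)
The plan is to reduce the statement directly to Theorem 5, once both of its hypotheses are shown to hold in this unit-weight setting.

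First, I would observe that because every arc carries weight $1$, every $F\in{\cal F}^k$ satisfies $\Upsilon^F=|{\cal A}F|=N-k=\varphi^k$, so ${\cal F}^k=\tilde{\cal F}^k$ and the forest $F$ of the statement automatically lies in $\tilde{\cal F}^k$. Thus the ``minimality'' prerequisite of Theorem 5 is free.

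Second, I would verify that the strict convexity inequality (\ref{nonequal}) holds at this $k$. Since ${\cal F}^{k-1}=\emptyset$ by hypothesis, $\varphi^{k-1}=\infty$. On the other hand, provided $k<N$, deleting any arc of some $F\in{\cal F}^k$ disconnects one of its trees into two and produces a spanning forest with $k+1$ components, so ${\cal F}^{k+1}\neq\emptyset$ and $\varphi^{k+1}=N-k-1$. Hence
$$\varphi^{k-1}-\varphi^k=\infty>1=\varphi^k-\varphi^{k+1},$$
which is exactly (\ref{nonequal}). With both prerequisites in hand, Theorem 5 applied to $F\in\tilde{\cal F}^k$ and ${\cal E}\in\aleph_k$ yields immediately that $F|_{\cal E}$ is a tree.

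There is essentially no obstacle to the argument: Theorem 7 is a direct specialization of Theorem 5 to the unit-weight case. The only minor caveat is the edge case $k=N$, in which ${\cal F}^{k+1}$ is necessarily empty and the reduction above is vacuous; but then the hypothesis ${\cal F}^{N-1}=\emptyset$ forces the digraph to have no arcs at all, ${\cal F}^N$ consists solely of the empty spanning forest, the algebra $\mathfrak{A}_N$ coincides with the Boolean $2^{\cal N}$ already noted in the preceding discussion, each atom of $\aleph_N$ is a singleton, and $F|_{\cal E}$ is trivially a one-vertex tree. So the statement holds in this remaining case as well.
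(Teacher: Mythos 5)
Your proposal is correct and matches the paper's treatment: the paper likewise observes that in the unit-weight setting every spanning $k$-forest is minimal (so ${\cal F}^k=\tilde{\cal F}^k$) and that the threshold index is the only nontrivial case, and then simply invokes Theorem 5, offering no separate proof. Your explicit check of (\ref{nonequal}) and of the $k=N$ edge case is harmless but not needed, since Theorem 5 carries no convexity hypothesis.
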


\subsection{Undirected graphs} 

The situation with undirected graphs looks most simple. Any undirected graph can be considered as directed by turning each edge into two opposing arcs. In this case, each $m$-vertex undirected tree is split into $m$ directed trees by enumerating all vertices designated as roots. This, in particular, means that in the undirected case all atoms of the algebra are labeled, and for them the hypothesis, according to Property 15, is true when (\ref{nonequal}) is satisfied. Note also that in this case, the algebra $\mathfrak{A}_k$ contains exactly $k$ atoms. Trees of any $k$-component minimal forest have  atoms of algebra $\mathfrak{A}_k$ themselves as sets of vertices. For undirected graphs, it is only necessary to check that, given the equal sign in the system of convexity inequalities, the restriction of the forest to an algebra atom is still a tree. That is, no special constructions are required and Theorem 1 is sufficient.

\section{The role of algebra atoms in Markov chains}

Diffusion processes under small random perturbations \cite{V2}, \cite{VF} lead to Markov chains with a finite number of states $N$, the distribution row vector of which satisfies the equation
\begin{equation*}
\dot{\vec x}=-\vec x{\bf L}^\varepsilon \ , \ \ L_{ii}^\varepsilon=-\sum_{j\neq i}L_{ij}^\varepsilon \ , 
\end{equation*}
where the transition densities (off-diagonal elements of the Laplace matrix) are exponentially small: $L_{ij}^\varepsilon\sim \exp (-V_{ij}/\varepsilon)$, $\varepsilon$ is a small parameter \cite{V}. Significantly different sub-processes \cite{V1},\cite{V2} are observed when executing (\ref{nonequal}) on exponentially large time scales $t=\tau \exp((\phi_{k-1}-\phi_k)/ \varepsilon)$. The dependence on a small parameter disappears in the limit, and in the slow time $\tau$ some of the details of the formed subprocess manage to be averaged out. Limit projectors are determined from the tree form of recording minors \cite{V3}-\cite{V5}, in which, due to unsignedness, only members corresponding to minimal forests survive. The minimal forests themselves are determined using the efficient algorithm \cite{V8}. Components belonging to one atom of the algebra $\mathfrak{A}_k$ are combined into one state. However, even with such a combination, the enlarged distribution vector $\vec x^k(\tau)$ does not have the property of stochastic continuity:

\begin{equation}
\lim_{\tau\searrow 0}\vec x^k(\tau)\neq \vec x^k (0) \ .
\end{equation}
There is equality only if the initial distribution $\vec x^k(0)$ is concentrated on the labeled atoms of the algebra $\mathfrak{A}_k$.  Theorems 5 and 6 allow us to naturally recalculate the enlarged transition probabilities from the initial $V_{ij}$ and determine the enlarged states themselves.



\centerline{Abstract}

\begin{center}{When a forest, narrowed to an atom of subset algebra, turns out to be a tree}
\end{center}

\centerline{Buslov V.A.}

\parbox[t]{12cm}
{\small It is proved that the restriction of the $k$ and $(k-1)$-component directed spanning forest of minimum weight to an atom of the subset algebra generated by the vertex sets of trees of $k$-component minimum spanning forests is a tree. For minimal spanning forests consisting of a smaller number of components, this property, generally speaking, does not exist. 
}
\vspace{0.5cm}

St. Petersburg State University, Faculty of Physics, Department of Computational Physics

198504 St. Petersburg, Old Peterhof, st. Ulyanovskaya, 3

Email: abvabv@bk.ru, v.buslov@spbu.ru
\end{document}